\newtheorem{theorem}{Theorem}
\newtheorem{corollary}{Corollary}
\newtheorem{lemma}{Lemma}[section]
\newtheorem{proposition}{Proposition}
\newtheorem{question}{Question}
\newtheorem{remark}{Remark}
\title{Contact graphs of ball packings}
\author{Alexey Glazyrin \thanks{School of Mathematical \& Statistical Sciences,
         The University of Texas Rio Grande Valley, USA}}
\date{}%\today}
\begin{document}

\maketitle

\begin{abstract}
A contact graph of a packing of closed balls is a graph with balls as vertices and pairs of tangent balls as edges. We prove that the average degree of the contact graph of a packing of balls (with possibly different radii) in $\mathbb{R}^3$ is not greater than $13.92$. We also find new upper bounds for the average degree of contact graphs in $\mathbb{R}^4$ and $\mathbb{R}^5$.
\end{abstract}

{\bf Keywords:} ball packing, kissing number, contact graph, packing density, packing of circles.

\section{Introduction}\label{sect:intro}

A packing of closed balls in $\mathbb{R}^d$ is a finite set of balls with non-intersecting interiors. Each packing naturally entails a contact graph where graph vertices are the balls of the packing and two vertices are connected by an edge if and only if the corresponding balls are tangent.

The problem of characterizing contact graphs of planar disk packings is completely solved by the Koebe--Andreev--Thurston Theorem (\cite{koe36,and70a,and70b,thu88}).

\begin{theorem}[Koebe--Andreev--Thurston]
For every simple planar graph $G$ there is a set of non-intersecting closed disks on the plane whose contact graph is $G$.
\end{theorem}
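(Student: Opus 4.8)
The plan is to reduce the statement to the case of a simple planar triangulation, and then to realize triangulations by a circle packing of the sphere obtained from a variational (equivalently, continuity) argument in the style of Andreev and Thurston. \emph{Step 1: reduction to triangulations.} I would fix a planar embedding of $G$; if $G$ is disconnected, pack its components in disjoint bounded regions and treat each separately, so assume $G$ is connected, and since the cases $|V(G)|\le 3$ are immediate, assume $|V(G)|\ge 4$. Next, augment $G$ by repeatedly inserting new vertices into faces together with new edges, each new edge having at least one new endpoint, so as to obtain a \emph{simple} planar triangulation $T$ on $\ge 4$ vertices in which $G$ is an \emph{induced} subgraph (when a facial walk repeats a vertex one first splits off extra corner-vertices so as not to create loops or multiple edges; this is routine but fiddly). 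Granting, from Step 2, that $T$ is the contact graph of a disk packing $\{D_v\}_{v\in V(T)}$ whose complement consists of curvilinear triangles, one per face, the subpacking $\{D_v\}_{v\in V(G)}$ then has contact graph exactly $G$: every tangency among the $D_v$ occurs along an edge of $T$, and among the vertices of $G$ those edges are precisely $E(G)$ since $G$ is induced in $T$. So it suffices to realize an arbitrary simple planar triangulation on $\ge 4$ vertices (which is automatically $3$-connected).

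\emph{Step 2: from radii to a packing.} I would realize $T$ on $\mathbb{S}^2$ and transfer to $\mathbb{R}^2$ by stereographic projection from a point inside one interstice (so that every cap maps to a round disk). Given radii $r=(r_v)\in(0,\pi)^{V(T)}$, each face $uvw$ determines a spherical triangle of three mutually tangent caps with a well-defined corner angle $\theta^{uvw}_v(r)$ at $v$, an explicit function of $r_u,r_v,r_w$ via the spherical law of cosines; set $\kappa_v(r)=2\pi-\sum_{uvw\ni v}\theta^{uvw}_v(r)$. From Gauss--Bonnet together with $V-E+F=2$ and $2E=3F$ one gets $\sum_v\kappa_v\equiv 0$. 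If $\kappa(r)\equiv 0$, a developing-map argument produces the packing: lay down the caps of one face and attach the cap of each neighbouring face in turn; the conditions $\kappa_v=0$ force the caps to close up around every vertex, so the developing map descends to a local homeomorphism $\mathbb{S}^2\to\mathbb{S}^2$, hence (the target being simply connected) a homeomorphism, and the union of the caps then covers $\mathbb{S}^2$ with exactly one curvilinear-triangle interstice per face. Thus everything reduces to finding $r$ with $\kappa(r)\equiv 0$.

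\emph{Step 3: existence of a zero-defect radius vector, and conclusion.} This is the analytic core. I would use the variational method, passing first, if convenient, through the standard M\"obius normalization that turns the spherical problem into a Euclidean or hyperbolic one: in the log-radii $u_v=\log r_v$ there is a function $F(u)$ with $\partial F/\partial u_v=\kappa_v$ whose Hessian is, up to sign, a nonnegatively weighted Laplacian of $T$, so $F$ is convex; one then shows $F$ is proper, after the obvious normalization, by examining the asymptotics of the angles $\theta^{uvw}_v$ as some radius tends to $0$ or $\pi$ or as a triangle degenerates, using that $T$ is a simple $3$-connected triangulation to see that every such degeneration drives $F\to+\infty$. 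Hence $F$ has a minimizer, at which $\kappa\equiv 0$. (Alternatively, the Andreev--Thurston continuity method: the map $r\mapsto\kappa(r)$ is locally injective --- the discrete rigidity statement, proved by a maximum principle on the sign pattern of $\kappa$ --- is open by invariance of domain, and is proper, and combined with $\sum_v\kappa_v\equiv 0$ this forces $0$ into its image.) Feeding the resulting $r$ through Steps 2 and 1 yields a disk packing of $\mathbb{R}^2$ with contact graph exactly $G$.

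\emph{Main obstacle.} The genuine difficulty is Step 3: producing the radii, and in particular controlling all the ways a radius vector can degenerate --- equivalently, in the continuity formulation, proving discrete rigidity --- which is exactly where one must use that $T$ is a simple $3$-connected triangulation rather than an arbitrary plane graph. Secondary technical points are the global injectivity of the developing map in Step 2 and the bookkeeping in Step 1 needed to keep $T$ simple while $G$ remains induced.
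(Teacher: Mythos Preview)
The paper does not prove this theorem: it is quoted in the introduction as a classical result, with references \cite{koe36,and70a,and70b,thu88}, and is used only to motivate the higher-dimensional questions that the paper actually addresses. There is therefore no ``paper's own proof'' to compare against.

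That said, your sketch is a faithful outline of the standard modern proof (reduce to a simple triangulation, parametrize by radii, show that the angle-defect map has a zero via convexity of a potential or via Andreev--Thurston continuity, then develop and stereographically project). The places you flag as delicate---properness/non-degeneracy in Step~3 and global injectivity of the developing map in Step~2---are indeed exactly where the real work lies, and a full write-up would need to carry those out rather than gesture at them. For the purposes of this paper, however, the theorem is simply cited, so no proof is expected of you here.
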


The natural question is to get a similar characterization of contact graphs in higher dimensions.

\begin{question}
What graphs may be realized as contact graphs of closed balls in $\mathbb{R}^d$?
\end{question}

This question does not impose any restrictions on closed balls. One of such reasonable restrictions is to require balls in a packing to be congruent. An observation by Kirkpatrick and Rote (see \cite{hli01} for more details) establishes that a graph $G$ is a contact graph of a unit ball packing in $\mathbb{R}^d$ if and only if the join $G\oplus K_2$ of the graph $G$ with an edge $K_2$ is a contact graph of a general ball packing in $\mathbb{R}^{d+1}$. This observation combined with results for packings of unit balls imply that recognizing a contact graph of ball packings is NP-hard in dimensions 3, 4, 5, 9, 25 (\cite{hli97,bre96,hli01}, see also the survey \cite{bez18}).

Since, as we can see, the general question is quite complicated, typically some characteristics of contact graphs are analyzed. For each contact graph $G$ of a closed ball packing in $\mathbb{R}^d$ denote its average degree by $k(G)$. Define $k_d=\sup k(G)$ taken over all contact graphs. A simple way to bound $k_d$ is by using kissing numbers. By a kissing number $\tau_d$ we mean the maximum number of non-overlapping closed unit balls tangent to a given unit ball in $\mathbb{R}^d$. As mentioned in \cite{kup94}, it easy to show that $k_d\leq 2\tau_d,$ since each ball cannot have more than $\tau_d$ larger or equal balls tangent to it. The state-of-the-art bounds for $\tau_d$ imply the following bounds for $k_d$:

$$k_3\leq 24 \cite{sch53}; k_4\leq 48 \cite{mus08}; k_5\leq 88 \cite{mit10}; k_6\leq 136 \cite{bac08}; k_7\leq 268 \cite{mit10}; k_8 \leq 480 \cite{odl79,lev79}$$
and the asymptotic (Kabatyanskii-Levenshtein) bound $$k_d\leq 2^{0.401d(1+o(1))} \cite{kab78}.$$

Using the area argument Kuperberg and Schramm proved in \cite{kup94} a non-trivial upper bound for $k_3$. They also connected $600$-cells in a chain and produced a ball packing with the average degree strictly greater than $12$.

\begin{theorem}[Kuperberg--Schramm \cite{kup94}]
$$12.56 \approx 666/53 \leq k_3 < 8 + 4\sqrt{3} \approx 14.93.$$
\end{theorem}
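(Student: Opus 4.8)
The plan is to work with the contact graph $G=(V,E)$ of a \emph{finite} packing (since $k_3=\sup k(G)$ is taken over these) and to prove the two halves separately: for the upper bound, show $|E|<(4+2\sqrt3)|V|$; for the lower bound, exhibit a single packing with $|E|/|V|=333/53$. For the upper bound I would run an area argument on spheres. Fix a ball $B_0$ of radius $r_0$ and centre $c_0$, let $B_1,\dots,B_m$ be the balls tangent to it (radii $r_i$, centres $c_i$), and set $u_i=(c_i-c_0)/|c_i-c_0|\in S^2$. Seen from $c_0$, the ball $B_i$ fills a cone of half-angle $\phi_i=\arcsin\big(r_i/(r_0+r_i)\big)$, and from $|c_0-c_i|=r_0+r_i$, $|c_0-c_j|=r_0+r_j$, $|c_i-c_j|\ge r_i+r_j$ the law of cosines gives
\[
\cos\angle(u_i,u_j)\ \le\ \frac{(r_0+r_i)^2+(r_0+r_j)^2-(r_i+r_j)^2}{2(r_0+r_i)(r_0+r_j)},
\]
which reduces to $r_0^2+r_0(r_i+r_j)\le 3r_ir_j$, hence to $\angle(u_i,u_j)\ge 60^\circ$, whenever $r_i,r_j\ge r_0$. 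Thus the $30^\circ$ spherical caps centred at the directions of the neighbours of radius $\ge r_0$ are pairwise interior-disjoint, so there are at most $4\pi\big/\big(2\pi(1-\cos30^\circ)\big)=4/(2-\sqrt3)=8+4\sqrt3$, i.e.\ at most $14$, of them. This already locates the constant: $8+4\sqrt3$ is the ratio of the area of $S^2$ to the area of a $30^\circ$ cap.

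This pointwise bound is not enough by itself, since a large $B_0$ can be tangent to arbitrarily many tiny balls, so one must pass to an average. I would do this by splitting each edge $\{B_i,B_j\}$ between its endpoints — a contribution $w(r_0,r_i)$ as seen from $B_0$ — chosen so that (i) $w(r_0,r_i)+w(r_i,r_0)\ge\tfrac12$ always, with equality for equal radii, and (ii) $\sum_{B_i\sim B_0}w(r_0,r_i)\le 2+\sqrt3$ for every ball $B_0$, the latter proved from the separation estimate above by a spherical covering argument calibrated to the $30^\circ$ cap. Then $\tfrac12|E|\le\sum_{\{i,j\}\in E}\big(w(r_i,r_j)+w(r_j,r_i)\big)=\sum_{B_0}\sum_{B_i\sim B_0}w(r_0,r_i)\le(2+\sqrt3)|V|$, whence $|E|\le(4+2\sqrt3)|V|$ and $k(G)\le 8+4\sqrt3$; a uniform slack in (ii) upgrades this to the strict bound in the theorem. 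I expect the real difficulty to be the choice of $w$ and the proof of (ii): a naive pointwise weight must fail, because one can pack balls of \emph{all} scales tangent to a single $B_0$ (a tangent scum, essentially an Apollonian family clinging to $\partial B_0$), so the bookkeeping has to charge most of the cost of a small tangency back to the small ball itself and control the resulting recursion. Arranging this so that the $30^\circ$-cap area remains the binding constraint everywhere is the substance of Kuperberg and Schramm's argument.

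For the lower bound I would argue by explicit construction rather than by existence. Start from a large piece of the face-centred cubic packing, in which each interior ball has degree $12$, and adjoin a controlled number of smaller balls occupying octahedral and tetrahedral holes, each of which raises the degree of several neighbours above $12$; then arrange finitely many such blocks — combinatorially modelled, as Kuperberg and Schramm do, on a chain of $600$-cells — so that they can be glued along their boundaries without destroying contacts. Counting vertices and edges of the resulting contact graph $G$ yields $k(G)=666/53>12$. The only genuine work here is the combinatorial design that keeps $|E|/|V|$ above $6$ after the blocks are joined, together with the routine check that the prescribed tangency pattern is realised by an actual ball packing.
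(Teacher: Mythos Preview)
Your double-counting scaffold for the upper bound is correct, but you leave the weight $w$ unspecified and explicitly flag its construction as ``the real difficulty'' --- and that construction is the entire content of the argument. The paper's weight (following Kuperberg--Schramm) is concrete: for $\rho\in(1,3)$ let $S_\rho(B)$ be the sphere concentric with $B$ of $\rho$ times the radius, and set $a(B_0,B_i)=\mathrm{area}\bigl(S_\rho(B_0)\cap B_i\bigr)/\mathrm{area}\bigl(S_\rho(B_0)\bigr)$. The key point, which your shadow-cap picture on the unit direction sphere misses, is Archimedes' formula $A=2\pi Rh$: it forces $a(B_0,B_i)+a(B_i,B_0)$ to equal the \emph{constant} $\tfrac{-\rho^2+4\rho-3}{4\rho}$ regardless of the two radii, so your condition (i) holds with equality automatically. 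Condition (ii) is then trivial --- the caps $S_\rho(B_0)\cap B_i$ are disjoint pieces of $S_\rho(B_0)$, so $\sum_i a(B_0,B_i)\le 1$ with no covering argument, recursion, or ``bookkeeping'' needed. Taking $\rho=\sqrt3$ gives $k_3\le 8\rho/(-\rho^2+4\rho-3)=8+4\sqrt3$. Your angular caps of radius $\arcsin\bigl(r_i/(r_0+r_i)\bigr)$ on $S^2$ do not enjoy this additive miracle, which is why you were driven toward an undefined charging scheme for small balls; the right move is to change the sphere on which the caps live, not the accounting.

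Your lower-bound sketch is also off target. The Kuperberg--Schramm construction is not face-centred cubic with extra balls inserted in the octahedral and tetrahedral holes; it comes directly from the $600$-cell $\{3,3,5\}$, whose $120$ vertices give a packing of $120$ congruent spheres on $S^3$ in which each touches $12$ others. A M\"obius image produces a Euclidean ball packing, and chaining copies together (identifying a ball of one copy with a ball of the next) yields a finite packing whose edge-to-vertex count gives average degree $666/53$. No fcc step is involved.
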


In $\cite{epp03}$, the lower bound for $k_3$ was improved by a more intricate construction based on $600$-cells.

\begin{theorem}[Eppstein--Kuperberg--Ziegler \cite{epp03}]
$$12.61 \approx 7656/607 \leq k_3.$$
\end{theorem}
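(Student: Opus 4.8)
The plan is a lower-bound construction: exhibit finite ball packings in $\mathbb{R}^3$ whose contact graphs have average degree tending to $7656/607$. The tool I would use is the three-dimensional analogue of the Koebe--Andreev--Thurston picture. Suppose a $4$-polytope $P$ admits a \emph{midsphere} realization in $\mathbb{R}^4$, i.e. a position in which every edge of $P$ is tangent to a fixed sphere $S^3$. To each vertex $v$ associate the spherical cap $C_v\subset S^3$ bounded by the circle along which the tangent cone from $v$ meets $S^3$; then $C_v$ and $C_w$ are tangent at a single point when $vw$ is an edge of $P$ and are disjoint otherwise. Stereographic projection of $S^3$ from a point in no $C_v$ sends $\{C_v\}$ to a packing of round balls in $\mathbb{R}^3$ whose contact graph is the $1$-skeleton of $P$, with average degree $2f_1(P)/f_0(P)$. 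So it suffices to produce midsphere-realizable polytopes $P$ with $2f_1(P)/f_0(P)\to 7656/607$.

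The starting point is the regular $600$-cell $\Pi$: it has a midsphere by symmetry, and $f_1(\Pi)/f_0(\Pi)=720/120=6$, i.e. average degree exactly $12$ (the Kuperberg--Schramm bound $k_3\geq 12.566$ is already a refinement of this). To push the ratio above $6$, I would, in many well-separated symmetrically placed spots, surgically replace a cell (or a small cluster of cells) of $\Pi$ by a denser $3$-ball with the same boundary carrying more interior edges per interior vertex, and/or concatenate copies of $\Pi$ along matched tetrahedral facets, repeating the pattern $N$ times so that interface effects become lower order in $N$. With the gadget and the assembly pattern optimized one arranges $f_0\sim 607N$, $f_1\sim 3828N$, so that the average degree of the resulting ball packings tends to $7656/607\approx 12.612$; the precise rational drops out of optimizing the combinatorics of one period.

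The heart of the argument is to show that each modified complex $P_N$ \emph{still} admits a midsphere realization --- equivalently, that the desired caps really exist on $S^3$. I would keep the construction as symmetric as possible so that finding the realization reduces to a fixed finite system of equations on the cap radii, solvable once and for all, and/or invoke a discrete uniformization/rigidity statement in the spirit of Schramm's ``caging the egg'' and the three-dimensional ball-packing theorems, after checking its combinatorial hypotheses for the $P_N$. Given a midsphere realization, a generic projection point together with rescaling gives a genuine finite ball packing in $\mathbb{R}^3$, and letting $N\to\infty$ yields $k_3\geq 7656/607$.

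The main obstacle is precisely this realizability step. Unlike the planar case, not every $3$-sphere is midsphere-realizable: if it were, two-neighborly simplicial $3$-spheres (whose $1$-skeleta are complete graphs, realized by cyclic $4$-polytopes) would give contact graphs of unbounded average degree, contradicting the area and kissing-number upper bounds on $k_3$. So the gadget and the gluings must be designed to maximize $f_1/f_0$ \emph{while staying inside the realizable range}, and one must control all cap radii to guarantee genuine tangency along every edge and strict disjointness everywhere else. Balancing these competing constraints is what forces the particular value $7656/607$ rather than something larger.
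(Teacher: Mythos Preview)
The paper does not prove this theorem; it merely states it with attribution to \cite{epp03} and remarks in one line that the improved lower bound comes from ``a more intricate construction based on $600$-cells.'' There is thus no proof in the present paper to compare your proposal against.

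That said, your framework matches what is actually done in \cite{epp03}: edge-tangent (midsphere) realizations of simplicial $4$-polytopes on $S^3$, stereographically projected to ball packings in $\mathbb{R}^3$, with the $600$-cell as the basic building block. Where your proposal falls short is that it remains a plan rather than a proof. You leave the ``gadget,'' the gluing pattern, and---most critically---the realizability argument entirely unspecified, while acknowledging that realizability is the crux. In \cite{epp03} the authors give an explicit construction from glued copies of the $600$-cell and verify edge-tangency directly from the geometry of the assembled pieces; the value $7656/607$ arises from counting vertices and edges in a definite object, not from an abstract optimization over possible gadgets. Your suggested fallback on discrete-uniformization or caging-the-egg type theorems will not close the gap in this dimension, since, as you yourself note, general realizability fails for $4$-polytopes. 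So the deficiency is executional rather than conceptual: you have the right scaffolding but none of the construction that makes it stand.
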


In the general case, $k_d$ can be bounded below by a lattice kissing number $\tau_d^*$, the maximal number of balls tangent to one ball in a lattice packing of unit balls. %Unfortunately, no general lower bounds for $\tau_d^*$ with exponential growth are known.
For $d=2^n$, it was shown that $\tau_d^*=2^{\Omega(\log^2 d)}$ \cite{lee64}. As for non-lattice packings, it was proven in \cite{alo97} that there is a finite unit ball packing in dimension $d=4^n$ such that each ball touches more than $2^{\sqrt{d}}$ others. This result implies $k_{4^n}>2^{2^n}$. Recently, Vl\u adu\c t \cite{vla19} proved the first exponential lower bound for lattice kissing numbers $\tau_d^* \geq 2^{0.0219d(1-o(1))}$. Various studies were also dealing with certain characteristics of contact graphs of ball packings such as chromatic numbers (\cite{mae07,che17}), graph separators (\cite{mil97}), or with non-realizability of concrete graphs (\cite{ben13}).

The main results of this paper are the new upper bounds $k_3<13.92$, $k_4<34.69$, $k_5<77.76$. The results in dimensions 4 and 5 are obtained by generalizing the method of Kuperberg and Schramm to higher dimensions. The improvement of the bound in $\mathbb{R}^3$ required a thorough analysis of packings of spherical caps via the upper bounds from \cite{flo01, flo07}.

The paper is structured as follows. In Section \ref{sect:kup-sch} we explain the approach of Kuperberg and Schramm with their upper bound for $k_3$. In Section \ref{sect:high} we show how their approach works in higher dimensions. Section \ref{sect:3d} is devoted to the proof of the new upper bound for $k_3$. Finally, in Section \ref{sect:discuss} we raise some questions and discuss possible future advancements in this area.

\section{Kuperberg-Schramm approach}\label{sect:kup-sch}

In this section we will explain the approach of Kuperberg and Schramm which allowed them to prove the upper bound of $8+4\sqrt{3}$ for $k_3$. Throughout the section we will use Archimedes' formula for areas of spherical caps: $A=2\pi R h$, where $R$ is the radius of the sphere and $h$ is the height of a cap. For the sake of exposition, we will start with the following proposition.

\begin{proposition}\label{prop:14}
$$\tau_3\leq 8+4\sqrt{3}.$$
\end{proposition}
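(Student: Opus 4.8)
The plan is to run the standard spherical-cap area argument on the unit sphere. Normalize so that the central ball is the closed unit ball centered at the origin, and let $B_1,\dots,B_N$ be pairwise non-overlapping closed unit balls each tangent to it. Every $B_i$ touches the central ball at a single point $p_i\in S^2$, and the center of $B_i$ is $2p_i$. The first step is to record the angular separation of the contact points: if $i\neq j$, then since $B_i$ and $B_j$ have disjoint interiors their centers $2p_i$ and $2p_j$ are at distance at least $2$, so the law of cosines applied to the isosceles triangle with apex at the origin and the two sides of length $2$ forces the angle $\angle(p_i,p_j)$ to be at least $60^\circ$.

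Next I would attach to each contact point $p_i$ the spherical cap $C_i\subset S^2$ of angular radius $30^\circ$ centered at $p_i$. By the angular separation just established, the caps $C_1,\dots,C_N$ have pairwise disjoint interiors. Applying Archimedes' formula with $R=1$ and cap height $h=1-\cos 30^\circ=1-\tfrac{\sqrt 3}{2}$, each cap has area
$$A=2\pi\left(1-\frac{\sqrt 3}{2}\right).$$

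Finally, comparing the total area of the $N$ disjoint caps with the area $4\pi$ of $S^2$ yields
$$N\le \frac{4\pi}{2\pi\left(1-\frac{\sqrt 3}{2}\right)}=\frac{2}{\,1-\frac{\sqrt 3}{2}\,}=\frac{4}{2-\sqrt 3}=4(2+\sqrt 3)=8+4\sqrt 3,$$
which is the asserted bound.

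There is no genuine obstacle here; the estimate is deliberately crude, since the next section refines it. The only point that needs a word of care is the angular-separation step, where the disjointness of ball interiors must be converted into the $60^\circ$ bound (and one should note that at exactly $60^\circ$ the two $30^\circ$ caps are externally tangent, so disjointness of interiors — which is all the area count needs — still holds). It is also worth flagging at the outset that this argument throws away the unavoidable gaps between congruent spherical caps, which is exactly the slack that Kuperberg and Schramm exploit in the remainder of this section to obtain a better bound for $k_3$.
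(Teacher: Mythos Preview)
Your proof is correct, but it takes a different route from the paper's. You work on the boundary sphere $S^2$ of the central ball, place artificial caps of angular radius $\pi/6$ at the contact points, and use the $60^\circ$ separation of contact points to get disjointness. The paper instead passes to a concentric sphere of radius $\sqrt{3}$ and uses the \emph{actual} intersections $S_{\sqrt{3}}(B)\cap B_i$ of the tangent balls with that sphere; these caps also have angular radius $\pi/6$, and their disjointness is immediate from the non-overlapping of the $B_i$ themselves, with no separate angular-separation lemma needed. Numerically the two arguments coincide, but the paper's version is chosen for a reason: it is precisely the $\rho=\sqrt{3}$ instance of the quantity $a(B_1,B_2)$ that drives the rest of Section~\ref{sect:kup-sch}. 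Your contact-point argument does not survive the passage to unequal radii, since a tiny ball tangent to $B$ can sit arbitrarily close to the contact point of another tangent ball, whereas the concentric-sphere intersections remain disjoint regardless of radii. One small correction to your closing remark: Kuperberg and Schramm do not exploit the gaps between caps to beat $8+4\sqrt{3}$; their contribution in this section is to show that the \emph{same} bound $8+4\sqrt{3}$ holds for $k_3$ (arbitrary radii), via the averaging identity of Lemma~\ref{lem:area3d}. The improvement from better density estimates comes only in Section~\ref{sect:3d}.
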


\begin{proof}
For a unit ball $B$ in $\mathbb{R}^3$, consider a concentric sphere with radius $\sqrt{3}$. Any unit sphere tangent to $B$ intersects this concentric sphere by a spherical cap with the angular spherical radius of $\pi/6$. The height of this spherical cap is $\sqrt{3}-3/2$. By Archimedes' formula, the area of this spherical cap is $2\pi \sqrt{3} (\sqrt{3}-3/2)=(6-3\sqrt{3})\pi$. Since the area of the concentric sphere is $12\pi$, no more than $\frac {12\pi}{(6-3\sqrt{3})\pi}=8+4\sqrt{3}$ spherical caps may fit in the surface of the concentric sphere.
\end{proof}

\begin{remark}
Of course kissing numbers are integer so any upper bound may be substituted by its integer part and Proposition \ref{prop:14} also implies that $\tau_3\leq 14$. The main purpose of the proposition is to emphasize the ideas to be transferred to the case of packings with different radii.
\end{remark}

The same idea of bounding the number of tangent spheres is not directly applicable when different radii are allowed. For a unit ball, one can construct any number of small balls tangent to it. However, for two tangent balls the smaller proportion of area taken by a smaller ball on a sphere concentric to a larger ball is compensated by a larger proportion of area taken by a larger ball on a sphere concentric to a smaller ball. This is the cornerstone of the approach by Kuperberg and Schramm.

Fix $\rho> 1$. For each closed ball $B$ denote the concentric sphere with radius $\rho$ times larger by $S_\rho(B)$. For two tangent balls $B_1$ with radius $r_1$ and $B_2$ with radius $r_2$, define $$a(B_1,B_2)=\dfrac {area(S_\rho(B_1)\cap B_2)}{area(S_\rho(B_1))}.$$

\begin{center}
\includegraphics[width=0.7\linewidth]{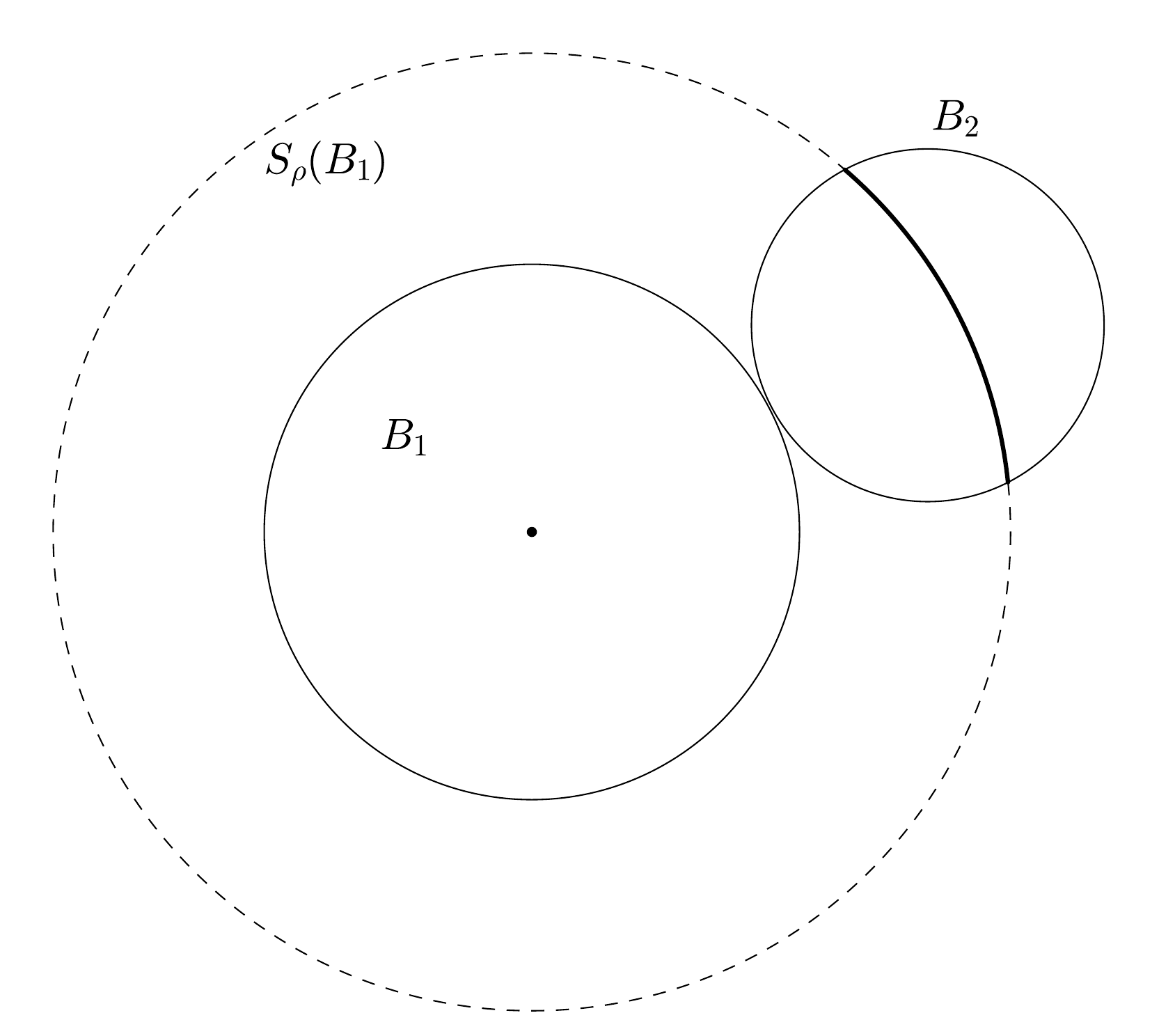}
\end{center}

Remarkably, if both $S_\rho(B_1)\cap B_2$ and $S_\rho(B_2)\cap B_1$ are non-empty, $a(B_1,B_2)+a(B_2,B_1)$ depends only on $\rho$. In order to prove this, denote the height of a spherical cap $S_\rho(B_1)\cap B_2$ by $h_1$ and the height of $S_\rho(B_2)\cap B_1$ by $h_2$ (if an intersection is empty we use 0 for its height). From this moment on, we consider only $\rho < 3$ because otherwise at least one of $a(B_1,B_2)$ and $a(B_2,B_1)$ is 0.

\begin{lemma}\label{lem:height}
$$\frac {h_1} {\rho r_1} + \frac {h_2} {\rho r_2} =\frac {-\rho^2+4\rho-3}{2\rho},$$ if both $S_\rho(B_1)\cap B_2$ and $S_\rho(B_2)\cap B_1$ are non-empty and the left hand side is greater than the right hand side otherwise.
\end{lemma}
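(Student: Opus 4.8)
The plan is to reduce the statement to the elementary plane geometry of two externally tangent spheres and then to verify the identity by a short direct computation.

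First I would place the center $O_1$ of $B_1$ at the origin and the center $O_2$ of $B_2$ at distance $d = r_1 + r_2$ along the positive $x$-axis (the distance between the centers being $r_1 + r_2$ since the balls are tangent with disjoint interiors). A point with $x^2 + y^2 + z^2 = \rho^2 r_1^2$ lies in $B_2$ exactly when $(x-d)^2 + y^2 + z^2 \le r_2^2$, and substituting the first relation into the second turns this into the half-space condition $x \ge x_0$, where $x_0 = \frac{d^2 + \rho^2 r_1^2 - r_2^2}{2d}$. Hence $S_\rho(B_1) \cap B_2$ is exactly the spherical cap $\{x \ge x_0\}$ on the sphere of radius $\rho r_1$; since $d^2 > r_2^2$ one has $x_0 > 0$ always, so this cap is always smaller than a hemisphere and its height equals $h_1 = \rho r_1 - x_0 = \frac{r_2^2 - (d - \rho r_1)^2}{2d}$ whenever this number is positive, i.e.\ whenever the cap is non-empty.

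Writing $d - \rho r_1 = r_2 - (\rho-1)r_1$, substituting $d = r_1 + r_2$, and expanding the difference of squares yields
\[
h_1 = \frac{(\rho-1)\,r_1\bigl(2 r_2 - (\rho-1)r_1\bigr)}{2(r_1+r_2)},
\qquad
h_2 = \frac{(\rho-1)\,r_2\bigl(2 r_1 - (\rho-1)r_2\bigr)}{2(r_1+r_2)},
\]
the second by the symmetry $1 \leftrightarrow 2$; in particular the cap $S_\rho(B_1) \cap B_2$ is non-empty precisely when $2 r_2 > (\rho-1) r_1$, and similarly with the indices swapped. Dividing by $\rho r_1$ and $\rho r_2$ respectively and adding, the factors $r_1$ and $r_2$ cancel and the numerator becomes $(\rho-1)\bigl[2(r_1+r_2) - (\rho-1)(r_1+r_2)\bigr] = (\rho-1)(3-\rho)(r_1+r_2)$, so
\[
\frac{h_1}{\rho r_1} + \frac{h_2}{\rho r_2} = \frac{(\rho-1)(3-\rho)}{2\rho} = \frac{-\rho^2 + 4\rho - 3}{2\rho},
\]
which is the claimed identity in the case both caps are non-empty.

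For the remaining case I would observe that when $\rho < 3$ we have $\frac{\rho-1}{2} < 1$, so the inequalities $2 r_2 > (\rho-1) r_1$ and $2 r_1 > (\rho-1) r_2$ cannot both fail; thus at most one cap is empty, say $S_\rho(B_2) \cap B_1$, and by convention we take $h_2 = 0$ for it. When this cap is empty, the quantity $\frac{(\rho-1) r_2 (2 r_1 - (\rho-1) r_2)}{2(r_1+r_2)}$ that the formula above would assign to its height is non-positive, so replacing it by $0$ strictly increases the left-hand side; since the identity was derived by pure algebra from the two formulas, irrespective of the signs of the factors, we conclude $\frac{h_1}{\rho r_1} + 0 > \frac{-\rho^2 + 4\rho - 3}{2\rho}$. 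The only part that requires any care is this sign bookkeeping in the degenerate case, together with the remark that $\rho < 3$ prevents both caps from vanishing simultaneously; everything else is a routine calculation.
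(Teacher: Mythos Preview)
Your proof is correct and follows essentially the same route as the paper: the paper introduces the angular cap radius $\alpha$ and uses the law of cosines to obtain $\cos\alpha=\dfrac{(\rho^2+1)r_1+2r_2}{2\rho(r_1+r_2)}$, then adds $\cos\alpha+\cos\beta$ and uses $h_i/(\rho r_i)=1-\cos$, whereas you set up Cartesian coordinates and compute the cutting abscissa $x_0$ directly --- but $x_0/(\rho r_1)$ is exactly the paper's $\cos\alpha$, so the two computations coincide line by line. Your handling of the degenerate case (the algebraic identity holds regardless of sign, and replacing a negative ``formula height'' by $0$ only increases the sum) is if anything slightly cleaner than the paper's monotonicity argument.
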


\begin{proof}
$\frac {h_1} {\rho r_1}=1-\cos\alpha$, where $\alpha$ is the spherical radius of the cap $S_\rho(B_1)\cap B_2$.

\begin{center}
\includegraphics[width=0.7\linewidth]{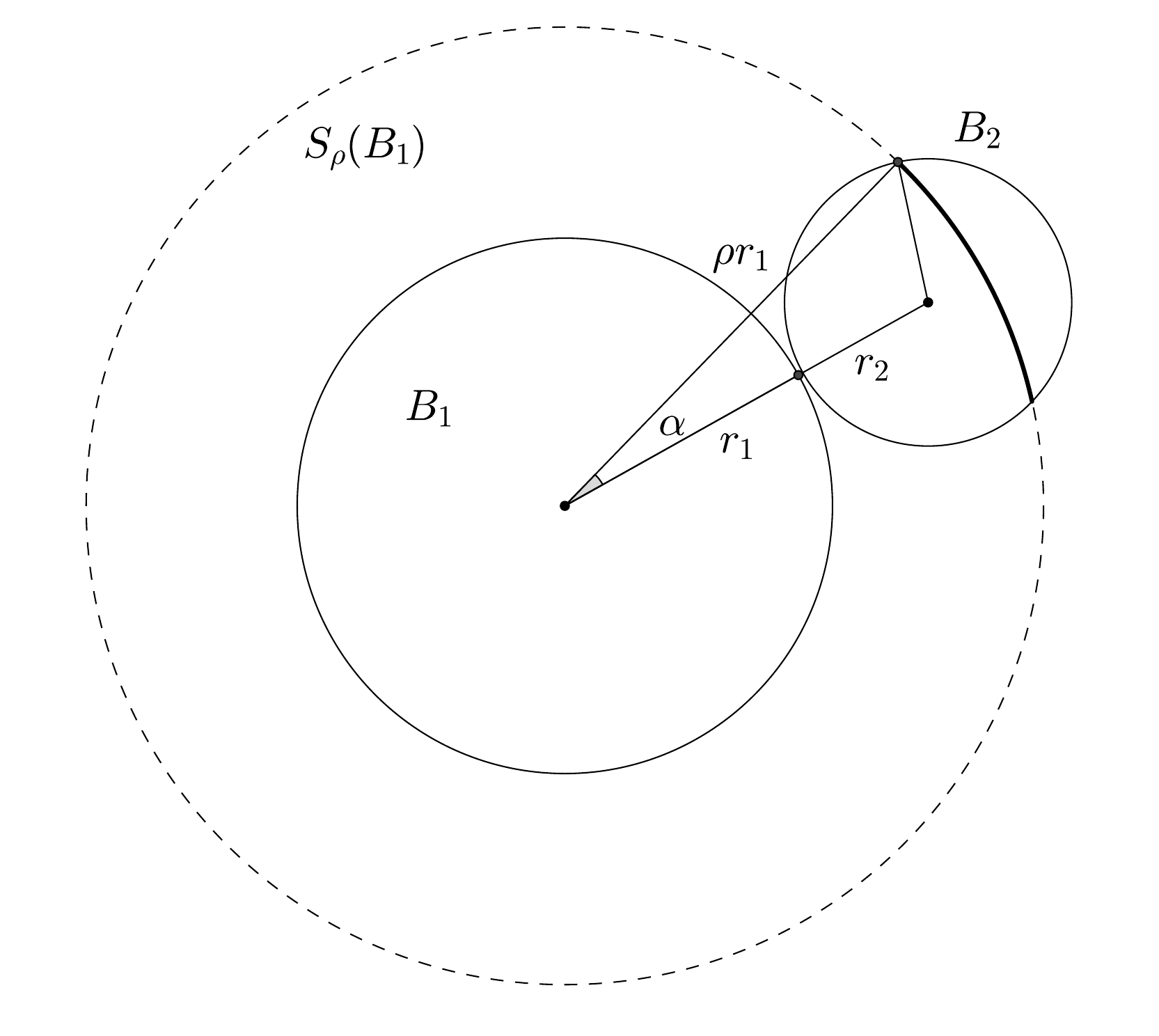}
\end{center}

We use the law of cosines for a triangle formed by the centers of $B_1$ and $B_2$ and any point on the boundary of the cap: $$(\rho r_1)^2+(r_1+r_2)^2-2 \rho r_1 (r_1+r_2) \cos\alpha=r_2^2,$$

\begin{equation}\label{formula:cap_radius}
\cos\alpha = \frac {(\rho r_1)^2+(r_1+r_2)^2- r_2^2} {2 \rho r_1 (r_1+r_2)} = \frac {(\rho^2+1) r_1 + 2r_2} {2\rho(r_1+r_2)}.
\end{equation}
Similarly, for the radius $\beta$ of the second cap we get

$$\cos\beta = \frac {(\rho^2+1) r_2 + 2r_1} {2\rho(r_1+r_2)}.$$
Therefore,

\begin{equation}\label{formula:caps}
\cos\alpha + \cos\beta= \frac {(\rho^2+1) r_1 + 2r_2} {2\rho(r_1+r_2)} + \frac {(\rho^2+1) r_2 + 2r_1} {2\rho(r_1+r_2)} = \frac {\rho^2+3}{2\rho},
\end{equation}

$$\frac {h_1} {\rho r_1} + \frac {h_2} {\rho r_2}= 2 - (\cos\alpha + \cos\beta)= \frac {-\rho^2+4\rho-3}{2\rho}.$$
Since $\rho<3$, the equality still holds when the second spherical caps consists of one point, i.e. $r_1+2r_2=\rho r_1$. Increasing $r_1$, we keep the second term equal to 0 and only increase the first one. Therefore, in the case when one intersection is empty the inequality holds.
\end{proof}

\begin{lemma}\label{lem:area3d}
$$a(B_1,B_2)+a(B_2,B_1)=\frac {-\rho^2+4\rho-3}{4\rho},$$ if both $S_\rho(B_1)\cap B_2$ and $S_\rho(B_2)\cap B_1$ are non-empty and the left hand side is greater than the right hand side otherwise.
\end{lemma}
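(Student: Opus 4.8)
The plan is to translate the cap-height identity of Lemma \ref{lem:height} directly into a statement about area ratios by invoking Archimedes' formula. First I would observe that $S_\rho(B_i)$ is a sphere of radius $\rho r_i$, so its total area equals $4\pi(\rho r_i)^2$, while by Archimedes' formula the spherical cap $S_\rho(B_1)\cap B_2$ of height $h_1$ has area $2\pi(\rho r_1)h_1$. Dividing, we get $a(B_1,B_2)=\frac{2\pi(\rho r_1)h_1}{4\pi(\rho r_1)^2}=\frac{h_1}{2\rho r_1}$, and by the same computation with the roles of $B_1$ and $B_2$ interchanged, $a(B_2,B_1)=\frac{h_2}{2\rho r_2}$.

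Next I would simply add these two identities and factor out $\tfrac12$, obtaining $a(B_1,B_2)+a(B_2,B_1)=\tfrac12\left(\dfrac{h_1}{\rho r_1}+\dfrac{h_2}{\rho r_2}\right)$. Substituting the value $\dfrac{-\rho^2+4\rho-3}{2\rho}$ supplied by Lemma \ref{lem:height} in the case where both caps are non-empty yields exactly $\dfrac{-\rho^2+4\rho-3}{4\rho}$, as claimed. The factor of two coming from the ratio $2\pi Rh$ versus $4\pi R^2$ is precisely what converts the denominator $2\rho$ of Lemma \ref{lem:height} into the denominator $4\rho$ here.

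Finally, for the degenerate case in which one of the intersections $S_\rho(B_1)\cap B_2$, $S_\rho(B_2)\cap B_1$ is empty, the associated height is taken to be $0$, and Lemma \ref{lem:height} already records that $\dfrac{h_1}{\rho r_1}+\dfrac{h_2}{\rho r_2}$ is then strictly greater than $\dfrac{-\rho^2+4\rho-3}{2\rho}$. Multiplying this inequality by $\tfrac12$ preserves its direction, so $a(B_1,B_2)+a(B_2,B_1)$ is strictly greater than $\dfrac{-\rho^2+4\rho-3}{4\rho}$, completing the proof. There is essentially no obstacle here: the entire arithmetic content lives in Lemma \ref{lem:height}, and the only point requiring care is the bookkeeping that $a$ is a normalized ratio of areas, so that Archimedes' $2\pi Rh$ against the total $4\pi R^2$ accounts for the halving.
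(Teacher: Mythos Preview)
Your proof is correct and follows exactly the approach the paper uses: the paper simply states that the lemma follows immediately from Lemma~\ref{lem:height} and Archimedes' formula, and you have merely written out that computation in full. No differences in method or content.
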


\begin{proof}
This lemma follows immediately from the previous lemma and Archimedes' formula for areas of spherical caps.
\end{proof}

Lemmas \ref{lem:height}-\ref{lem:area3d} were essentially used by Kuperberg and Schramm, cf. equation (7) in \cite{kup94}.  

Denote by $dens(\rho)$ the supremum of $\sum_i a(B,B_i)$, where the supremum is taken over all sets $\{B_i\}$ of closed balls with disjoint interiors such all $B_i$ are tangent to $B$. If $G=(V,E)$ is a contact graph of a ball packing then, on the one hand,
$$\sum\limits_{\{X,Y\}\in E} (a(X,Y)+a(Y,X))\geq \frac {-\rho^2+4\rho-3}{4\rho} |E|.$$
On the other hand,
$$\sum\limits_{\{X,Y\}\in E} (a(X,Y)+a(Y,X)) \leq dens(\rho) |V|$$ so $$2|E|/|V|\leq \frac {8\rho} {-\rho^2 +4\rho +3}\ dens(\rho)$$
and, therefore, we have proven the following bound.

\begin{theorem}\label{thm:gen3}
$$k_3\leq \inf\limits_{1<\rho<3} \left\{\frac {8\rho} {-\rho^2 +4\rho -3}\ dens(\rho)\right\},$$
where $dens(\rho)$ denotes the supremum over proportions of area of $S_\rho(B)$ covered by non-overlapping balls tangent to $B$.
\end{theorem}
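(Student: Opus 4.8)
The plan is to run a two-way counting argument on the quantity $a(\cdot,\cdot)$ introduced above. Fix $\rho\in(1,3)$, let $G=(V,E)$ be the contact graph of an arbitrary ball packing, and consider
$$\Sigma \;=\; \sum_{\{X,Y\}\in E}\bigl(a(X,Y)+a(Y,X)\bigr).$$
Lemma \ref{lem:area3d} will provide a lower bound for $\Sigma$ proportional to $|E|$, and the definition of $dens(\rho)$ will provide an upper bound proportional to $|V|$; comparing the two bounds and rearranging gives an estimate for $k(G)=2|E|/|V|$, and optimizing over $\rho$ completes the proof.

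\textbf{Lower bound on $\Sigma$.} First I would verify that for every edge $\{X,Y\}$ --- i.e. every pair of tangent balls --- and every $\rho<3$, at least one of the caps $S_\rho(X)\cap Y$ and $S_\rho(Y)\cap X$ is non-empty. Writing $t$ for the ratio of the radius of $Y$ to the radius of $X$, one has $S_\rho(X)\cap Y\ne\emptyset$ iff $t\ge(\rho-1)/2$ and $S_\rho(Y)\cap X\ne\emptyset$ iff $t\le 2/(\rho-1)$, so both being empty would force $(\rho-1)/2>2/(\rho-1)$, i.e. $\rho>3$. Hence Lemma \ref{lem:area3d} applies to every edge and yields $a(X,Y)+a(Y,X)\ge\frac{-\rho^2+4\rho-3}{4\rho}$; summing over $E$,
$$\Sigma \;\ge\; \frac{-\rho^2+4\rho-3}{4\rho}\,|E|.$$
The coefficient $-\rho^2+4\rho-3=-(\rho-1)(\rho-3)$ is strictly positive exactly on the interval $1<\rho<3$, which is why the infimum in the statement is taken over that range.

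\textbf{Upper bound on $\Sigma$.} Next I would regroup $\Sigma$ by the ``center'' ball of each ordered pair: the edge $\{X,Y\}$ contributes $a(X,Y)$ to the group indexed by $X$ and $a(Y,X)$ to the group indexed by $Y$, so
$$\Sigma \;=\; \sum_{B\in V}\ \sum_{B'\sim B} a(B,B').$$
For a fixed $B$, the neighbours $B'$ are pairwise interior-disjoint balls all tangent to $B$, so their intersections with the sphere $S_\rho(B)$ are spherical regions with pairwise disjoint relative interiors; therefore $\sum_{B'\sim B} a(B,B')$ equals the fraction of the area of $S_\rho(B)$ covered by a family of non-overlapping balls tangent to $B$, which is at most $dens(\rho)$ by definition. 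Summing over $B\in V$ gives $\Sigma\le dens(\rho)\,|V|$.

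\textbf{Conclusion and the main difficulty.} Combining the two estimates, $\frac{-\rho^2+4\rho-3}{4\rho}\,|E|\le dens(\rho)\,|V|$, whence $k(G)=2|E|/|V|\le \frac{8\rho}{-\rho^2+4\rho-3}\,dens(\rho)$; as $G$ and $\rho\in(1,3)$ are arbitrary, taking the supremum over contact graphs and then the infimum over $\rho$ gives the claimed bound on $k_3$. The counting is routine; the only delicate point is the degenerate case of Lemma \ref{lem:area3d} in which one of the caps shrinks to a point or disappears, which is precisely what the hypothesis $\rho<3$ controls. The genuine work of this program lies elsewhere: the trivial bound $dens(\rho)\le 1$ already reproduces the Kuperberg--Schramm bound $8+4\sqrt3$ (attained at $\rho=\sqrt3$), and any improvement in $\mathbb{R}^3$ must come from the finer analysis of packings of circular caps via \cite{flo01, flo07} carried out in Section \ref{sect:3d}.
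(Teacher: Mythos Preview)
Your proposal is correct and follows essentially the same double-counting argument as the paper: bound $\sum_{\{X,Y\}\in E}(a(X,Y)+a(Y,X))$ from below via Lemma~\ref{lem:area3d} and from above via $dens(\rho)$, then rearrange. Your explicit check that for $\rho<3$ the two caps cannot both be empty is a helpful clarification that the paper leaves implicit in its restriction to $1<\rho<3$.
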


Kuperberg and Schramm used $dens(\rho)\leq 1$ and, taking the optimum $\rho=\sqrt{3}$, proved their upper bound.

\section{Bounds in higher dimensions}\label{sect:high}

We use the same notations $S_\rho (B)$, $a(B_1, B_2)$, etc as in Section \ref{sect:kup-sch}. Throughout this section we use the following formula for the $(d-1)$-dimensional area of a spherical cap with spherical radius $\alpha$ on the unit sphere in $\mathbb{R}^d$ \cite{li11}:

\begin{equation}\label{formula:cap_area}
A=\frac {\pi^{d/2}} {\Gamma(d/2)} \int\limits_0^{\sin^2\alpha} t^{\frac{d-3}2}(1-t)^{-\frac{1}{2}} dt.
\end{equation}
For $d=3$, this formula is equivalent to Archimedes' formula.

\begin{lemma}\label{lem:gen}
For a fixed $\rho$, $1<\rho<3$, $a(X,Y)+a(Y,X)$ reaches its minimum when $X$ and $Y$ are congruent.
\end{lemma}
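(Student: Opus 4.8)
The plan is to reduce the inequality to a one-variable optimization and then verify convexity/monotonicity. Write $r_1 = r$, $r_2 = 1$ (by scale invariance only the ratio matters), and use formula~\eqref{formula:cap_radius} together with its analogue for $\beta$ to express $\sin^2\alpha$ and $\sin^2\beta$ as explicit rational functions of $r$ and $\rho$. Then by the cap-area formula~\eqref{formula:cap_area}, $a(X,Y)+a(Y,X)$ is, up to the constant $\pi^{d/2}/\Gamma(d/2)$ and a normalization by the full sphere area, equal to $F(r) := \int_0^{\sin^2\alpha(r)} t^{(d-3)/2}(1-t)^{-1/2}\,dt + \int_0^{\sin^2\beta(r)} t^{(d-3)/2}(1-t)^{-1/2}\,dt$. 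The congruent case is $r=1$, where $\alpha=\beta$ by symmetry of~\eqref{formula:caps}, and the swap $r_1 \leftrightarrow r_2$ corresponds to $r \mapsto 1/r$, so $F(r) = F(1/r)$; thus it suffices to show $F$ is non-decreasing on $r \ge 1$, equivalently $F'(r) \ge 0$ there, with equality of the min at $r=1$.

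First I would differentiate: $F'(r) = g(\sin^2\alpha)\cdot \frac{d}{dr}\sin^2\alpha + g(\sin^2\beta)\cdot\frac{d}{dr}\sin^2\beta$, where $g(s) = s^{(d-3)/2}(1-s)^{-1/2}$ is the (positive) integrand. From~\eqref{formula:caps} we have $\cos\alpha + \cos\beta$ constant in $r$, hence $\sin\alpha\,\alpha' = -\sin\beta\,\beta'$, i.e. the two derivatives $\frac{d}{dr}\sin^2\alpha = 2\sin\alpha\cos\alpha\,\alpha'$ and $\frac{d}{dr}\sin^2\beta = 2\sin\beta\cos\beta\,\beta'$ are related by $\sin\alpha\cos\alpha\,\alpha' / (\sin\beta\cos\beta\,\beta') = -\cos\alpha/\cos\beta$ after using $\sin\alpha\,\alpha' = -\sin\beta\,\beta'$. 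So $F'(r)$ has the sign of $\alpha'\big(\sin\alpha\cos\alpha\,g(\sin^2\alpha) - \sin\beta\cos\beta\,g(\sin^2\beta)\big)$ — but I must be careful about the sign of $\cos\beta$, which can be negative when $\rho$ is large, and about whether the second cap is empty (where Lemma~\ref{lem:area3d}-style boundary reasoning from Lemma~\ref{lem:height} applies and the inequality only improves). On $r\ge 1$ one checks from~\eqref{formula:cap_radius} that $\alpha$ is monotone in $r$ (say $\alpha' \le 0$: enlarging $B_1$ shrinks its own cap $\alpha$ while enlarging the cap $\beta$ seen from $B_2$), so the remaining task is to compare $\phi(\alpha) := \sin\alpha\cos\alpha\,g(\sin^2\alpha) = \sin^{d-2}\alpha\cos\alpha(1-\sin^2\alpha)^{-1/2} = \sin^{d-2}\alpha$ — wait, $\cos\alpha/(1-\sin^2\alpha)^{1/2} = \cos\alpha/|\cos\alpha| = \operatorname{sgn}\cos\alpha$, so $\phi(\alpha) = \sin^{d-2}\alpha \cdot \operatorname{sgn}(\cos\alpha)$ when $\cos\alpha>0$ and similarly for $\beta$. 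Thus when both cosines are positive $F'(r)$ has the sign of $\alpha'(\sin^{d-2}\alpha - \sin^{d-2}\beta)$, and since for $r\ge 1$ one has $\alpha \le \beta$ (the smaller ball sees the bigger cap) this product is $\le 0 \cdot (\le 0) \ge 0$ — giving $F'(r)\ge 0$ as desired.

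The main obstacle, I expect, is handling the regime where $\cos\beta < 0$ (possible for $\rho$ moderately large and $r$ large), because then $\beta > \pi/2$, the sign bookkeeping in $\phi$ flips, and one must argue directly — presumably by returning to Lemma~\ref{lem:height}, noting that once $\cos\beta$ would go negative the corresponding cap has reached or passed a hemisphere or, if the geometry forbids $\beta>\pi/2$ for a genuine tangency configuration, ruling it out — together with the degenerate case where one of the two caps is empty, which is exactly the ``otherwise'' clause inherited from Lemma~\ref{lem:height} and where the sum strictly exceeds the congruent value. Once these boundary and sign cases are dispatched, the congruent configuration $r=1$ is the unique interior critical point and a minimum by the $r \leftrightarrow 1/r$ symmetry together with $F'\ge 0$ on $r\ge 1$, completing the proof.
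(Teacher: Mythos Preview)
Your approach is essentially the paper's: differentiate the sum of cap areas subject to the constraint $\cos\alpha+\cos\beta=C$ from~\eqref{formula:caps} and show the only critical point is the symmetric one. The paper parametrizes by $x=\cos\alpha$ rather than by the radius ratio $r$, which makes the constraint linear and the derivative a single line, $g'(x)=2\big((1-(C-x)^2)^{(d-3)/2}-(1-x^2)^{(d-3)/2}\big)$; this choice also dissolves your $\cos\beta<0$ worry, since $C=(\rho^2+3)/(2\rho)>1$ for $\rho\in(1,3)$ forces both $x$ and $C-x$ to lie in $[C-1,1]\subset(0,1]$. One small slip in your chain-rule bookkeeping: factoring out $\alpha'$ requires $\beta'=-(\sin\alpha/\sin\beta)\,\alpha'$, not $\beta'=-\alpha'$, so the correct comparison is between $\sin^{d-3}\alpha$ and $\sin^{d-3}\beta$ rather than $\sin^{d-2}$; the sign conclusion for $d\ge4$ is unchanged.
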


\begin{proof}
We begin with the case when both $a(X,Y)$ and $a(Y,X)$ are not 0. Denote the radii of the spherical caps $S_\rho(X)\cap Y$ and $S_\rho(Y)\cap X$ by $\alpha$ and $\beta$, respectively. Using formula (\ref{formula:cap_area}) we get

$$a(X,Y)+a(Y,X) = K\left(\int\limits_0^{\sin^2\alpha} t^{\frac{d-3}2}(1-t)^{-\frac{1}{2}} dt + \int\limits_0^{\sin^2\beta} t^{\frac{d-3}2}(1-t)^{-\frac{1}{2}} dt\right),$$
where $K$ is a constant depending only on $d$. Radii $\alpha$ and $\beta$ must satisfy formula (\ref{formula:caps}) showed in the proof of Lemma \ref{lem:height}: $\cos\alpha + \cos\beta = C$, where $C$ is a constant depending only on $\rho$, $C\in (1,2)$. We denote $\cos\alpha$ by $x$, $x\in[C-1,1]$ so that $x\leq 1$ and $C-x\leq 1$. Then $\cos\beta=C-x$ and the value to be optimized may be rewritten as

$$g(x) := \frac 1 K (a(X,Y)+a(Y,X)) = \int\limits_0^{1-x^2} t^{\frac{d-3}2}(1-t)^{-\frac{1}{2}} dt + \int\limits_0^{1-(C-x)^2} t^{\frac{d-3}2}(1-t)^{-\frac{1}{2}} dt.$$

$$g'(x)=(-2x)(1-x^2)^{\frac {d-3}{2}}x^{-1} + 2(C-x) (1-(C-x)^2)^{\frac {d-3} {2}} (C-x)^{-1} =$$

$$= 2(-(1-x^2)^{\frac {d-3}{2}}+(1-(C-x)^2)^{\frac {d-3} {2}}).$$
Therefore, $g$ is decreasing when $1-x^2\geq 1-(C-x)^2$, i.e. when $x\in[C-1,C/2]$, and increasing when $x\in[C/2,1]$. The only minimum is attained at $x=C/2$ or, in other words, when $\alpha=\beta$ and the radii of $X$ and $Y$ are the same.

The case when one of the intersections is empty can be explained using the same argument as in Lemma \ref{lem:height}. The value of $a(X,Y)+a(Y,X)$ in the case one spherical cap consists of only one point, i.e. $\rho r_1=r_1+2r_2$, is not smaller than the observed minimum. When we increase $r_1$ one of $a(X,Y)$ increases and the other retains its 0 value. Therefore, the total value is even larger.
\end{proof}

\begin{remark}
Note that, for $d=3$, $g'(x)=0$ so $g(x)$ is constant and this lemma generalizes Lemmas \ref{lem:height} and \ref{lem:area3d}
\end{remark}

We can find the minimum established by Lemma \ref{lem:gen} explicitly. By formula (\ref{formula:caps}), $\cos\alpha=\cos\beta=\frac{\rho^2+3}{4\rho}$. The $(d-1)$-dimensional area of the unit sphere by formula (\ref{formula:cap_area}) can be found as

$$\frac {2\pi^{d/2}} {\Gamma(d/2)} \int\limits_0^{1} t^{\frac{d-3}2}(1-t)^{-\frac{1}{2}} dt.$$
Hence

$$a(X,Y)+a(Y,X)\geq \dfrac {\int\limits_0^{1-\left(\frac{\rho^2+3}{4\rho}\right)^2} t^{\frac{d-3}2}(1-t)^{-\frac{1}{2}} dt} {\int\limits_0^{1} t^{\frac{d-3}2}(1-t)^{-\frac{1}{2}} dt}.$$
Denote this minimum by $f_d(\rho)$. Then, similarly to the 3-dimensional case, we get the general bound.

\begin{theorem}\label{thm:high}
$$k_d\leq \inf\limits_{1<\rho<3} \left\{ \frac 2 {f_d(\rho)}\ dens_d(\rho)\right\},$$
where $dens_d(\rho)$ denotes the supremum over proportions of area of $S_\rho(B)$ covered by non-overlapping balls tangent to $B$.
\end{theorem}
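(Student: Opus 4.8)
The plan is to mimic the derivation of Theorem~\ref{thm:gen3} almost verbatim, with Lemma~\ref{lem:gen} and the explicit formula for $f_d(\rho)$ playing the role that Lemma~\ref{lem:area3d} played in the three-dimensional argument. Let $G=(V,E)$ be the contact graph of a packing of closed balls in $\mathbb{R}^d$, and fix $\rho$ with $1<\rho<3$. For each edge $\{X,Y\}\in E$ the two balls $X$ and $Y$ are tangent, so by Lemma~\ref{lem:gen} (together with the explicit evaluation of the minimum displayed just before the theorem) we have
$$
a(X,Y)+a(Y,X)\;\geq\; f_d(\rho).
$$
Summing over all edges gives the lower bound
$$
\sum_{\{X,Y\}\in E}\bigl(a(X,Y)+a(Y,X)\bigr)\;\geq\; f_d(\rho)\,|E|.
$$

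For the matching upper bound, I would reorganize the same sum by vertices. For a fixed ball $B\in V$, the balls $B_i$ tangent to $B$ have pairwise disjoint interiors, so the spherical caps $S_\rho(B)\cap B_i$ are pairwise non-overlapping subsets of the sphere $S_\rho(B)$; hence $\sum_i a(B,B_i)\leq dens_d(\rho)$ by the definition of $dens_d(\rho)$. Since each edge $\{X,Y\}$ contributes the term $a(X,Y)$ to the vertex sum at $X$ and the term $a(Y,X)$ to the vertex sum at $Y$,
$$
\sum_{\{X,Y\}\in E}\bigl(a(X,Y)+a(Y,X)\bigr)\;=\;\sum_{B\in V}\sum_{B_i\sim B} a(B,B_i)\;\leq\; dens_d(\rho)\,|V|.
$$
Combining the two displays yields $f_d(\rho)\,|E|\leq dens_d(\rho)\,|V|$, that is,
$$
k(G)\;=\;\frac{2|E|}{|V|}\;\leq\;\frac{2}{f_d(\rho)}\,dens_d(\rho).
$$
As this holds for every contact graph $G$ and every admissible $\rho$, taking the supremum over $G$ and then the infimum over $\rho\in(1,3)$ gives the claimed bound $k_d\leq \inf_{1<\rho<3}\{\,2\,dens_d(\rho)/f_d(\rho)\,\}$.

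There is essentially no obstacle here: the content of the theorem is entirely packed into Lemma~\ref{lem:gen} and the area computation preceding the statement, and what remains is the double-counting bookkeeping identical to the $d=3$ case. The only point that needs a word of care is the boundary behavior when one of the intersections $S_\rho(X)\cap Y$ is empty: in that case Lemma~\ref{lem:gen} still guarantees $a(X,Y)+a(Y,X)\geq f_d(\rho)$ (the minimum is attained in the interior, at congruent balls), so the edgewise lower bound is unaffected, and an empty cap simply contributes $0$ to the vertex sum, which only helps the upper bound. One should also note that $f_d(\rho)>0$ for $\rho\in(1,3)$, since $\frac{\rho^2+3}{4\rho}<1$ on this interval, so dividing by $f_d(\rho)$ is legitimate.
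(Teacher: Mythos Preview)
Your proposal is correct and follows essentially the same approach as the paper: the paper's proof consists of exactly the two displayed inequalities $\sum_{\{X,Y\}\in E}(a(X,Y)+a(Y,X))\geq f_d(\rho)\,|E|$ and $\sum_{\{X,Y\}\in E}(a(X,Y)+a(Y,X))\leq dens_d(\rho)\,|V|$, followed by the division. Your write-up simply expands the bookkeeping (the vertex reorganization, the empty-cap case, and the positivity of $f_d(\rho)$) that the paper leaves implicit.
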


\begin{proof}
For a contact graph $G=(V,E)$,

$$\sum\limits_{\{X,Y\}\in E} (a(X,Y)+a(Y,X))\geq f_d(\rho) |E|,$$

$$\sum\limits_{\{X,Y\}\in E} (a(X,Y)+a(Y,X)) \leq dens_d(\rho) |V|.$$
Therefore,

$$2|E|/|V|\leq \frac {2} {f_d(\rho)}\ dens_d(\rho).$$
\end{proof}

$f_d(\rho)$, as a function of $\rho$, reaches its maximum when $1-\left(\frac{\rho^2+3}{4\rho}\right)^2$ is maximal, i.e. $\rho=\sqrt{3}$ and $S_\rho(X)\cap Y$ is a spherical cap with radius $\pi/6$. Using $dens_d(\sqrt{3})\leq 1$ and Theorem \ref{thm:high} for $\rho=\sqrt{3}$, we get the bound analogous to the Kuperberg-Schramm bound in higher dimensions.

\begin{theorem}\label{thm:high_bound}
$$k_d\leq a(d)=\frac {2\int_0^1 t^{\frac {d-3}{2}} {(1-t)^{-\frac 1 2}} dt} {\int_0^{1/4} t^{\frac {d-3}{2}} {(1-t)^{-\frac 1 2}}dt}.$$
\end{theorem}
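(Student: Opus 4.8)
The plan is to specialize Theorem \ref{thm:high} to the value $\rho=\sqrt{3}$ and then compute $f_d(\sqrt{3})$ explicitly. Recall that Theorem \ref{thm:high} gives $k_d\leq \inf_{1<\rho<3} \{ 2 \, dens_d(\rho)/f_d(\rho)\}$; since the infimum is over all admissible $\rho$, picking the single value $\rho=\sqrt{3}$ yields the valid (possibly weaker) bound $k_d\leq 2\, dens_d(\sqrt{3})/f_d(\sqrt{3})$. For the density factor I would invoke the trivial estimate $dens_d(\sqrt{3})\leq 1$, which holds because non-overlapping caps on $S_{\sqrt{3}}(B)$ cannot cover more than the whole sphere.

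Next I would evaluate $f_d(\sqrt{3})$ using its definition from just before Theorem \ref{thm:high}, namely
$$f_d(\rho)=\dfrac{\int_0^{1-\left(\frac{\rho^2+3}{4\rho}\right)^2} t^{\frac{d-3}{2}}(1-t)^{-\frac{1}{2}}\, dt}{\int_0^{1} t^{\frac{d-3}{2}}(1-t)^{-\frac{1}{2}}\, dt}.$$
Plugging in $\rho=\sqrt{3}$ gives $\rho^2+3=6$ and $4\rho=4\sqrt{3}$, so $\frac{\rho^2+3}{4\rho}=\frac{6}{4\sqrt{3}}=\frac{\sqrt{3}}{2}$, whence $1-\left(\frac{\rho^2+3}{4\rho}\right)^2 = 1-\frac{3}{4}=\frac{1}{4}$. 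Thus $f_d(\sqrt{3})=\dfrac{\int_0^{1/4} t^{\frac{d-3}{2}}(1-t)^{-\frac{1}{2}}\, dt}{\int_0^{1} t^{\frac{d-3}{2}}(1-t)^{-\frac{1}{2}}\, dt}$. Substituting into $k_d\leq 2/f_d(\sqrt{3})$ produces exactly $a(d)=\dfrac{2\int_0^1 t^{\frac{d-3}{2}}(1-t)^{-\frac{1}{2}}\, dt}{\int_0^{1/4} t^{\frac{d-3}{2}}(1-t)^{-\frac{1}{2}}\, dt}$, which is the claimed statement. I should also note, following the sentence preceding the theorem, that $\rho=\sqrt{3}$ is in fact the optimal choice for maximizing $f_d(\rho)$ (equivalently minimizing $2/f_d(\rho)$) under the crude bound $dens_d\leq 1$, since $f_d$ is increasing in the upper integration limit $1-\left(\frac{\rho^2+3}{4\rho}\right)^2$ and this quantity is maximized when $\frac{\rho^2+3}{4\rho}$ is minimized, i.e. at $\rho=\sqrt{3}$ by AM--GM; but this optimality remark is not strictly needed for the bound to hold.

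There is essentially no obstacle here: the proof is a two-line substitution once Theorem \ref{thm:high} and the formula for $f_d(\rho)$ are in hand. The only point requiring a moment of care is the evaluation $\frac{\sqrt 3}{2}$ for the cap cosine and the resulting $\frac14$ for the integration limit, together with checking that $\sqrt{3}$ lies in the open interval $(1,3)$ so that Theorem \ref{thm:high} indeed applies — both are immediate. I would therefore present the proof as: apply Theorem \ref{thm:high} with $\rho=\sqrt{3}$, note $dens_d(\sqrt{3})\leq 1$, compute $\left(\frac{\rho^2+3}{4\rho}\right)^2=\frac34$ at $\rho=\sqrt 3$, and simplify.
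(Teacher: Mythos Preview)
Your proposal is correct and follows exactly the paper's own argument: specialize Theorem~\ref{thm:high} at $\rho=\sqrt{3}$, use the trivial bound $dens_d(\sqrt{3})\leq 1$, and compute $1-\bigl(\tfrac{\rho^2+3}{4\rho}\bigr)^2=\tfrac14$ to obtain $f_d(\sqrt{3})$ and hence $a(d)$. The optimality remark about $\rho=\sqrt{3}$ that you include is also precisely what the paper notes just before stating the theorem.
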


\begin{remark}\label{rem:area}
Just like the Kuperberg-Schramm upper bound is a generalization of Proposition \ref{prop:14}, this theorem is a direct generalization of the upper bound for kissing numbers $\tau_d\leq a(d)$ based on area estimates.
\end{remark}

For $d=4,5$, this theorem gives the new upper bounds on $k_d$.

\begin{corollary}
$k_4< 34.69$, $k_5< 77.76$.
\end{corollary}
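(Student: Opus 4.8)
The plan is to apply Theorem \ref{thm:high_bound} directly with $d=4$ and $d=5$ and reduce the corollary to a numerical verification of two integral inequalities. First I would record the exact formula $a(d) = 2\int_0^1 t^{(d-3)/2}(1-t)^{-1/2}\,dt \big/ \int_0^{1/4} t^{(d-3)/2}(1-t)^{-1/2}\,dt$ and note that both numerator and denominator are finite for $d=4,5$ (the integrand $t^{(d-3)/2}(1-t)^{-1/2}$ is integrable near $t=0$ since $(d-3)/2 \geq -1/2 > -1$, and the only other potential singularity is at $t=1$, which lies outside the denominator's range and is an integrable $(1-t)^{-1/2}$ singularity in the numerator). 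So the right-hand side is a well-defined real number in each case, and it suffices to show it is strictly less than $34.681$ for $d=4$ and strictly less than $77.757$ for $d=5$.

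Next I would evaluate the integrals in closed form. For $d=5$ the integrand is $t(1-t)^{-1/2}$; substituting $u = 1-t$ gives $\int t(1-t)^{-1/2}\,dt = -\frac23(1-t)^{3/2} + 2(1-t)^{1/2} \cdot (\text{const})$, more precisely $\int_0^s t(1-t)^{-1/2}\,dt = \frac23 - \frac23(1-s)^{3/2} - 2\big(1 - (1-s)^{1/2}\big)\cdot(\ldots)$; I would just carry out the elementary antiderivative $\int t(1-t)^{-1/2}dt = 2(1-t)^{1/2} - \frac{2}{3}(1-t)^{3/2} + C$ (up to sign bookkeeping) and plug in the limits $0$ and $1$ for the numerator and $0$ and $1/4$ for the denominator, obtaining a closed-form value involving $\sqrt{3}$. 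For $d=4$ the integrand is $t^{1/2}(1-t)^{-1/2}$; the substitution $t = \sin^2\theta$ turns $\int_0^s t^{1/2}(1-t)^{-1/2}\,dt$ into $\int_0^{\arcsin\sqrt{s}} 2\sin^2\theta\,d\theta = \arcsin\sqrt{s} - \sqrt{s(1-s)}$. Hence the numerator is $2(\arcsin 1 - 0) = \pi$ and the denominator is $\arcsin\tfrac12 - \sqrt{\tfrac14\cdot\tfrac34} = \tfrac{\pi}{6} - \tfrac{\sqrt3}{4}$, giving $a(4) = \dfrac{2\pi}{\tfrac{\pi}{6} - \tfrac{\sqrt3}{4}}$.

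Finally I would bound these closed forms numerically: for $d=4$, $a(4) = \dfrac{2\pi}{\pi/6 - \sqrt3/4} = \dfrac{24\pi}{2\pi - 3\sqrt3}$, and using $\pi < 3.14160$, $\sqrt3 > 1.73205$ one checks $2\pi - 3\sqrt3 > 6.2830 - 5.1962 = 1.0868$ and $24\pi < 75.399$, so $a(4) < 75.399/1.0868 < 34.681$; a careful two-sided estimate with enough digits pins this down. For $d=5$ the analogous closed form (after simplification, something of the shape $a(5) = \dfrac{8/3}{\text{(explicit expression in }\sqrt3\text{)}}$) is bounded the same way against $77.757$. The main obstacle here is not conceptual but arithmetic hygiene: I must track the antiderivatives and the evaluation at the endpoints $0$, $1/4$, $1$ without sign or factor errors, and then supply rational bounds on $\pi$ and $\sqrt3$ tight enough that the strict inequalities $a(4) < 34.681$ and $a(5) < 77.757$ are rigorously established rather than merely numerically plausible. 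Since $a(d)$ is a smooth decreasing-gap quantity and the claimed bounds have three decimal places of slack built in, this is routine once the closed forms are in hand.
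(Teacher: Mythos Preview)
Your approach is sound and in fact more explicit than the paper's, which simply reports that MATLAB gives $a(4)<34.681$ and $a(5)<77.757$ without any closed forms. Reducing the incomplete Beta integrals via $t=\sin^2\theta$ (for $d=4$) and $u=1-t$ (for $d=5$) is the right move and yields $a(4)=\dfrac{12\pi}{2\pi-3\sqrt3}$ and $a(5)=\dfrac{32}{16-9\sqrt3}$, both of which can then be bounded by hand using a few decimals of $\pi$ and $\sqrt3$.

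There is, however, a genuine slip in your $d=4$ bookkeeping. You correctly compute the numerator $2\int_0^1 t^{1/2}(1-t)^{-1/2}\,dt = 2\arcsin 1 = \pi$, but then record $a(4)=\dfrac{2\pi}{\pi/6-\sqrt3/4}=\dfrac{24\pi}{2\pi-3\sqrt3}$, which is twice the correct value. With that stray factor your final numerical claim $75.399/1.0868<34.681$ is simply false (the quotient is about $69.4$). Drop the extra $2$ and you get $a(4)=\dfrac{12\pi}{2\pi-3\sqrt3}\approx 34.68$, and then the strict inequality does go through. This is precisely the ``arithmetic hygiene'' issue you anticipated; the method is fine, only that one line needs repair. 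For $d=5$ note that $a(5)=32/(16-9\sqrt3)\approx 77.7562$, so the margin against $77.757$ is under $10^{-3}$ and you will need $\sqrt3$ to at least five decimals to close it rigorously.
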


\begin{proof}
We use MATLAB to calculate numerically $a(4)$ and $a(5)$: $a(4)< 34.69$, $a(5)< 77.76$.
\end{proof}

Starting from 6, upper bounds based on kissing numbers become better:
$a(6) \approx 170.58$; $a(7) \approx 368.74$; $a(8) \approx 788.65$.

As mentioned in Remark \ref{rem:area}, the bound of Theorem \ref{thm:high_bound} coincides with the bound for kissing numbers based on area estimates so it is asymptotically worse than the Kabatyanskii-Levenshtein bound from \cite{kab78}.

\section{New bound in dimension 3}\label{sect:3d}

The area argument is arguably the easiest way to get upper bounds on kissing numbers. Sections \ref{sect:kup-sch} and \ref{sect:high} essentially explain how to extend this argument to the situation of packings with different radii. It is reasonable to try extending more sophisticated methods of analyzing kissing numbers to the more general case of different radii.

One of the fruitful approaches in this direction goes back to Fejes~T{\'o}th (see \cite{fej43}). The idea consists of constructing a certain tiling associated with a packing (typically, a Delaunay-like or Voronoi-like tiling) and bounding the density of the packing in each tile of a tiling. This bound is then used as a general bound on the density.

In his original paper, Fejes~T{\'o}th showed that the density of a packing of congruent circles of spherical radius $\alpha$ in the unit sphere is not greater than the density of this packing in the regular spherical triangle of side length $2\alpha$ with centers of circles at the vertices of this triangle. Coxeter in \cite{cox63} conjectured that an analogous bound (sometimes also known as the simplex bound) will be true in higher dimensions as well and found an explicit expression for it. Finally, B{\"o}r{\"o}czky in \cite{bor78} proved this bound for all spaces of constant curvature using subdivisions into quasi-orthoschemes (refinements of Delaunay and Voronoi tilings).

We use the theorem that immediately follows from the results of Florian in \cite{flo01, flo07} generalizing \cite{fej43} for the case of spherical caps of different sizes. 

\begin{theorem}\label{thm:florian}
Let $K(\alpha)$ be a non-decreasing function defined on $I=[\alpha_{min},\alpha_{max}]$, $0<\alpha_{min}\leq \alpha_{max}\leq \frac \pi 2$. For a packing $\mathcal{C}$ of a unit sphere with circles whose radii belong to $I$, the density is defined as 

$$d(\mathcal{C})=\frac 1 {4\pi} \sum\limits_{C\in\mathcal{C}} K(radius(C)).$$
For $x,y,z\in I$, we consider a spherical triangle $\Delta$ formed by centers of pairwise tangent circles of radii $x,y,z$. The density of this triangle is defined by

$$D(x,y,z)=\frac 1 {2\pi\cdot area(\Delta)} \left(K(x)\angle x + K(y)\angle y+K(z)\angle z\right),$$
where $\angle x$, $\angle y$, and $\angle z$ are measures of spherical angles with vertices at centers of circles of radii $x,y$, and $z$, respectively.

Then $d(\mathcal{C})\leq \max\limits_{x,y,z\in I} D(x,y,z)$.
\end{theorem}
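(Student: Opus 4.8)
The plan is to deduce Theorem~\ref{thm:florian} from the quasi-orthoscheme technique of B\"or\"oczky \cite{bor78} in the form extended by Florian \cite{flo01,flo07}, rather than to reprove the simplex bound from scratch. First I would set up the relevant tiling: given a packing $\mathcal{C}$ of the unit sphere by circular caps with radii in $I$, associate to it a Delaunay-type decomposition of $\mathbb{S}^2$ whose vertices are the cap centers, and then refine each Delaunay cell into quasi-orthoschemes as in \cite{bor78}. Each quasi-orthoscheme has a distinguished vertex that is a cap center, say of radius $\alpha$, and carries a local ``weight'' $K(\alpha)\cdot(\text{angle of the orthoscheme at that vertex})$; summing these weights over all orthoschemes incident to a fixed center recovers $K(\text{radius}(C))\cdot 2\pi$, because the orthoscheme angles at a given center partition a full $2\pi$. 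Hence $\sum_{C} K(\text{radius}(C)) = \frac{1}{2\pi}\sum_{T} (\text{weight of }T)$, and dividing by $4\pi$ expresses $d(\mathcal{C})$ as an area-weighted average of the quantities $\frac{1}{2\pi\,\mathrm{area}(T)}\cdot(\text{weight of }T)$ over the orthoschemes $T$.

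**Reduction to triangles.** The second step is to bound, for each single quasi-orthoscheme, its weighted density by $\max_{x,y,z\in I} D(x,y,z)$. This is exactly where \cite{flo01,flo07} enter: Florian's results give a monotonicity/convexity statement showing that the ``density'' functional of an orthoscheme, as its defining parameters (edge lengths, hence the cap radii and the pairwise distances) vary, is maximized when the configuration degenerates to a genuine spherical triangle whose three vertices carry pairwise tangent caps of radii $x,y,z\in I$ --- at which point the orthoscheme decomposition of that triangle collapses and the weighted density equals $D(x,y,z)$. The hypothesis that $K$ is non-decreasing and that $\alpha_{\max}\le\pi/2$ is used precisely here to guarantee that the extremal configuration is the tangent-triangle one and that the relevant edge-length constraints are compatible (tangency of caps of radii $x,y$ forces the side to have length $x+y\le\pi$). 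Once every orthoscheme satisfies (weighted density) $\le \max_{x,y,z} D(x,y,z)$, the area-weighted average over all orthoschemes inherits the same bound, giving $d(\mathcal{C})\le \max_{x,y,z\in I} D(x,y,z)$.

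**The main obstacle.** The delicate point is making the ``degenerate to a tangent triangle'' reduction rigorous uniformly over all orthoschemes arising from all packings: one must check that no orthoscheme in the B\"or\"oczky subdivision has parameters outside the region where Florian's monotonicity applies, and that the degeneration stays within the class of admissible triangles (radii in $I$, the side-length equalling the sum of the two radii at its endpoints, all three such triangles realizable on $\mathbb{S}^2$). Handling boundary cells --- orthoschemes near caps of radius exactly $\alpha_{\min}$ or $\alpha_{\max}$, and cells where the Delaunay decomposition is non-generic --- requires a limiting argument, and the interplay between the angle-sum identity at each center and the area of the spherical triangle must be tracked carefully since $D(x,y,z)$ involves $\mathrm{area}(\Delta)$ in the denominator. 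Because \cite{bor78,flo01,flo07} already carry out essentially this program, in the paper I would present this as a short deduction, citing those works for the per-orthoscheme estimate and spelling out only the bookkeeping (the weight decomposition over centers, the passage from orthoschemes to the triangle bound, and the verification that the extremal triangles are the tangent ones).
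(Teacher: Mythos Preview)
Your outline is in the right spirit---decompose the sphere, bound the weighted density locally, average---but it takes a detour that the paper avoids and that creates a real gap. The paper's argument works directly at the level of triangles: one first saturates the packing and invokes Moln\'ar's Delaunay-like triangulation \cite{mol67} of $\mathbb{S}^2$ with cap centers as vertices, and then applies Florian's per-triangle estimate \cite{flo01,flo07}, which says that the weighted density of any such Delaunay triangle is at most the weighted density of \emph{some} triangle formed by three pairwise tangent caps with radii in $I$. No orthoschemes enter; in the $2$-sphere the Delaunay cells are already triangles, so B\"or\"oczky's quasi-orthoscheme refinement \cite{bor78} is unnecessary machinery here.

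The substantive issue with your version is the step where you claim Florian's results bound the density of a single \emph{orthoscheme} by $\max_{x,y,z} D(x,y,z)$. Florian's monotonicity argument is formulated for full Delaunay triangles (three cap centers, three angles, three $K$-values), not for orthoschemes carrying a single cap center and a single $K(\alpha)$. The quantity $\tfrac{K(\alpha)\,\theta}{2\pi\,\mathrm{area}(T)}$ attached to an orthoscheme is not of the form $D(x,y,z)$, and there is no a priori reason it is dominated by $\max D$: the comparison only becomes meaningful after you reassemble the orthoschemes around a Delaunay triangle, at which point you are back to the paper's direct approach. So either drop the orthoscheme refinement and argue per Delaunay triangle (as the paper does, citing \cite{mol67} for the triangulation and \cite{flo01,flo07} for the extremality of tangent triangles), or supply an independent per-orthoscheme inequality---which is not in the cited literature and would need its own proof.
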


We can think of $K$ as a weight function so $\sum K(radius(C))$ is the total weight of all spherical caps in a packing. Then $d(\mathcal{C})$ represents the total weight density of the packing. On the other hand, for a spherical angle $\angle x$, the part of the spherical cap of radius $x$ that is inside the angle is $\frac {\angle x} {2\pi}$ so its weight is $K(x) \frac {\angle x} {2\pi}$. The total weight of the triangle may be calculated as
$$\frac 1 {2\pi} \left(K(x)\angle x + K(y)\angle y+K(z)\angle z\right)$$
and the triangle's weight density is precisely
$$\frac 1 {2\pi\cdot area(\Delta)} \left(K(x)\angle x + K(y)\angle y+K(z)\angle z\right).$$
The main conclusion of the theorem is that, in order to bound the maximum weight density, it is sufficient to consider only triangles formed by three pairwise tangent spherical caps.

The proof of this theorem essentially consists of two parts. First, we can show that for any saturated packing with caps of radii between $\alpha_{min}$ and $\alpha_{max}$, there exists a Delaunay-like (Moln{\'a}r) triangulation (see \cite{mol67}). The second part consists of proving that the maximal density among Delaunay-like triangles is attained on a triangle defined by three pairwise tangent caps.

\begin{remark}
Formally, Florian proved the theorem only for the case when $I$ is a finite set of possible radii but Theorem \ref{thm:florian} immediately follows from his results.
\end{remark}

We will couple this bound on the density with Theorem \ref{thm:gen3} to get the new bound in dimension 3. Just to recall the notation used in the previous sections, by $dens(\rho)$ we mean the supremum over proportions of area of $S_\rho(B)$ covered by non-overlapping balls tangent to $B$.

If we forget that spherical caps on $S_\rho(B)$ were initially formed by non-overlapping balls and just try to find upper bounds for an arbitrary packing by spherical caps, it is impossible to separate $dens(\rho)$ from 1. A spherical cap may have an arbitrarily small radius and thus the density of a packing may be arbitrarily close to 1.

\begin{center}
\includegraphics[scale=0.6]{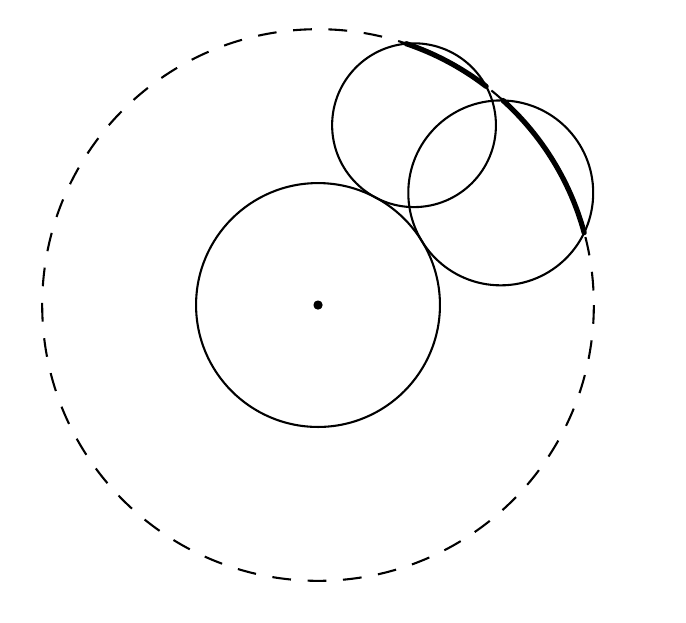}

Relatively small caps may not be too close to each other.
\end{center}

The key idea under finding a meaningful upper bound for $dens(\rho)$ is to use an auxiliary circular packing which extends the original one and, most importantly, may not contain spherical caps of arbitrarily small size. For each ball $X$ tangent to a ball $B$, we define a spherical cap $C_\rho(B,X)$ as a cap on $S_\rho(B)$ defined by common tangent planes of $B$ and $X$ if a point of tangency of such common tangent plane with $X$ is inside $S_\rho(B)$. Otherwise, $C_\rho(B,X)=S_\rho(B)\cap X$.

\begin{center}
\includegraphics[scale=0.45]{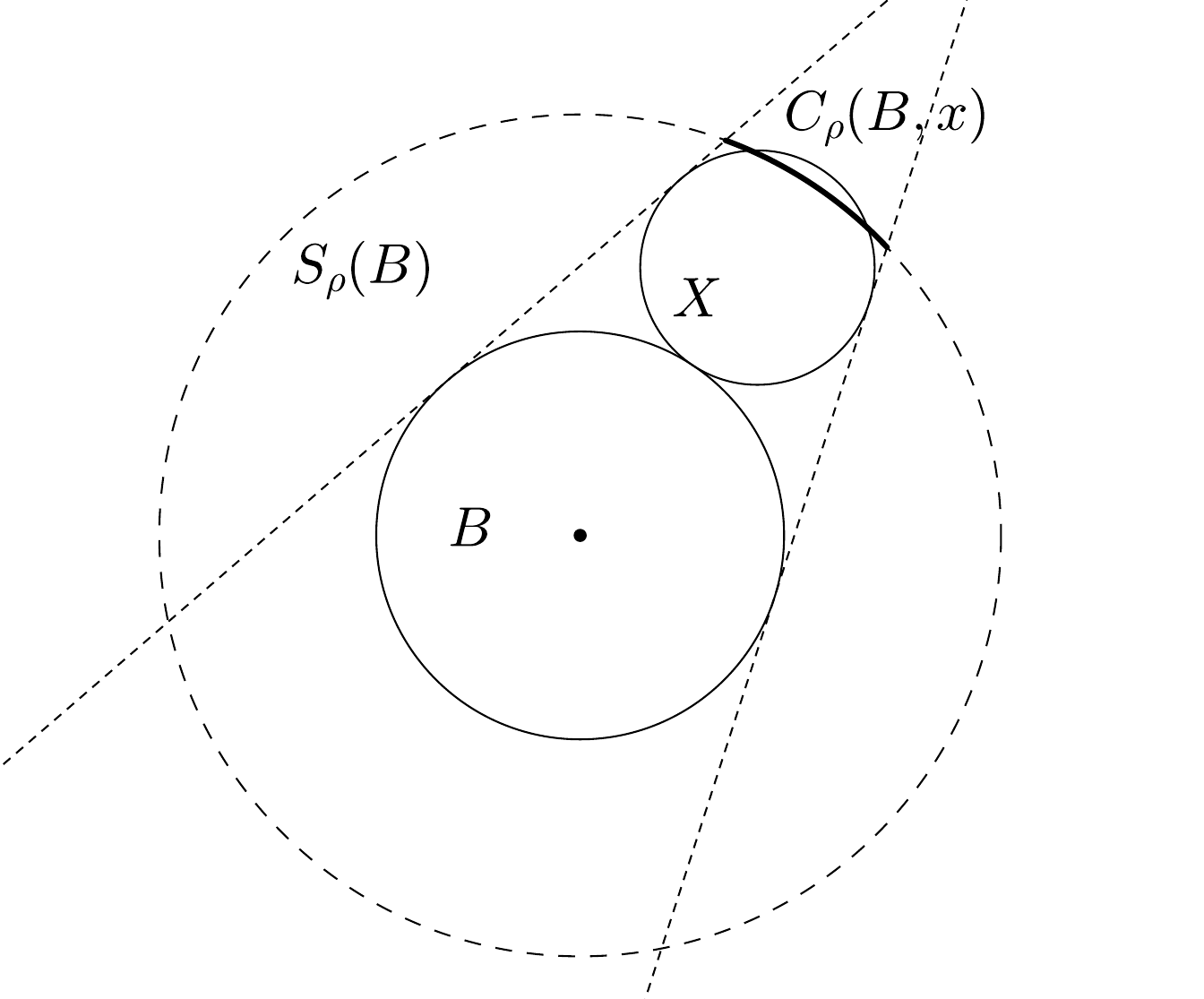}
\end{center}

\begin{lemma}\label{lem:aux}
For any $\rho>1$ and any non-overlapping balls $X$ and $Y$ tangent to $B$, spherical caps $C_\rho(B,X)$ and $C_\rho(B,Y)$ do not overlap.
\end{lemma}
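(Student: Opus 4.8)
The plan is to show that the two caps $C_\rho(B,X)$ and $C_\rho(B,Y)$ are separated by a plane, hence disjoint. The natural candidate is the plane $\Pi$ through the center $O$ of $B$ that separates the centers of $X$ and $Y$. Since $X$ and $Y$ have disjoint interiors, there is a plane separating them; I would like to choose it passing through $O$, but that need not be automatic. Instead, I would argue directly: let $\Pi$ be the plane through $O$ perpendicular to the segment joining the centers of $X$ and $Y$, or more robustly, the radical plane of $X$ and $Y$ translated to pass through $O$. The key geometric claim is that $C_\rho(B,X)$ lies entirely on the $X$-side of $\Pi\cap S_\rho(B)$ (a great sphere of $S_\rho(B)$), and symmetrically for $Y$; since these two closed hemispherical regions meet only along the great sphere, disjointness of the caps follows once we check each cap does not reach that great sphere (or reaches it only in a way compatible with the other).

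The main steps, in order, are: (1) Reduce to the case where $C_\rho(B,X)$ is the ``tangent-cone cap,'' i.e. the cap cut out on $S_\rho(B)$ by the common tangent planes of $B$ and $X$, since in the other case $C_\rho(B,X)=S_\rho(B)\cap X\subseteq$ (tangent-cone cap), so disjointness for the larger caps implies it for the smaller. Here I would invoke a short lemma: the tangent-cone cap always contains $S_\rho(B)\cap X$, which follows because every common tangent plane of $B$ and $X$ has $X$ on one side, so the cone over $X$ from the external homothety center contains $X$. (2) Identify $C_\rho(B,X)$ as exactly the set of points of $S_\rho(B)$ visible ``behind'' $B$ from the homothety center $p$ of $B$ and $X$ (the apex of the common tangent cone); concretely it is the intersection of $S_\rho(B)$ with the solid cone from $p$ tangent to $B$. (3) Show this solid cone is contained in the convex cone from $p$ over $X$: both $B$ and $X$ are inscribed in the tangent cone with apex $p$, and $B$ lies between $p$ and $X$, so the tangent cone to $B$ from $p$ is the same as the tangent cone to $X$ from $p$; hence $C_\rho(B,X)\subseteq$ (solid cone over $X$ from $p_X$) for the relevant apex. (4) Finally, use that the solid cones over $X$ and over $Y$ from their respective apexes, restricted to the region outside $B$, are separated because $X$ and $Y$ are disjoint convex bodies both tangent to $B$; a separating plane through $O$ between $X$ and $Y$ also separates these cone-pieces on $S_\rho(B)$.

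The hard part, I expect, is step (4) together with pinning down the exact separating plane: one must verify that a plane separating the \emph{balls} $X$ and $Y$ can be chosen through the center $O$ of $B$, and that this plane also separates the \emph{caps}, which extend beyond $X$ and $Y$ toward $B$. The clean way around this is to work entirely with the tangent cones: a cap $C_\rho(B,X)$ is the ``shadow'' on $S_\rho(B)$ of $B$ as seen from the external apex $p_X$, equivalently $S_\rho(B)\setminus$(open cone from $p_X$ avoiding $B$); two such shadows for tangent, non-overlapping $X,Y$ overlap only if the corresponding tangent cones to $B$ overlap on $S_\rho(B)$, and one checks the angular radius of each shadow, measured from $O$, is determined by where the tangent line from $p_X$ meets $S_\rho(B)$. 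Summing the two angular radii and comparing to the angular distance between the two tangency points on $\partial B$ (which is exactly the angle subtended by the tangency of $X$ and $Y$ with $B$) gives the inequality that forces disjointness; this reduces to a planar trigonometric estimate in the plane containing $O$ and the two contact points, which I would carry out using the law of cosines as in Lemma~\ref{lem:height}.
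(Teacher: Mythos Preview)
Your outline circles the right objects but does not close the argument, and it misses the device the paper actually uses. You propose to separate the caps by ``the radical plane of $X$ and $Y$ translated to pass through $O$'', but once you translate it there is no reason it still separates $X$ from $Y$; and your cone steps (3)--(4) cannot work as written, since the two tangent cones from $p_X$ and $p_Y$ both contain $B$ and hence overlap massively---nothing in the outline isolates the \emph{relevant} caps on $S_\rho(B)$. Your fallback to a planar trigonometric inequality (sum of cap radii versus angular distance of tangency points) is plausible, but you have not carried it out, and it is exactly the ``hard part'' you flag.

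The paper's proof is shorter and uses a different idea. Take the \emph{actual} radical plane $p$ of $\partial X$ and $\partial Y$, with no translation. Two easy facts: (i) $p$ meets $B$, because otherwise one closed half-space of $p$ misses $B$ yet contains one of $X,Y$, contradicting tangency; (ii) $p$ contains no interior point of $X$ or $Y$, a standard property of radical planes of non-overlapping spheres. The key geometric observation---absent from your plan---is that every segment from an interior point of $C_\rho(B,X)$ to a point of $B$ must cross the interior of $X$ (the cap is precisely the portion of $S_\rho(B)$ ``screened'' from $B$ by $X$). Hence if $p$ contained an interior point $q$ of $C_\rho(B,X)$, the segment from $q$ to any point of $p\cap B$ (which exists by (i)) would lie in $p$ and meet $\operatorname{int}X$, contradicting (ii). Thus $p$ separates the caps. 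This single ``shadowing'' observation replaces your tangent-cone machinery and the unperformed trigonometric estimate.
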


\begin{center}
\includegraphics[scale=0.45]{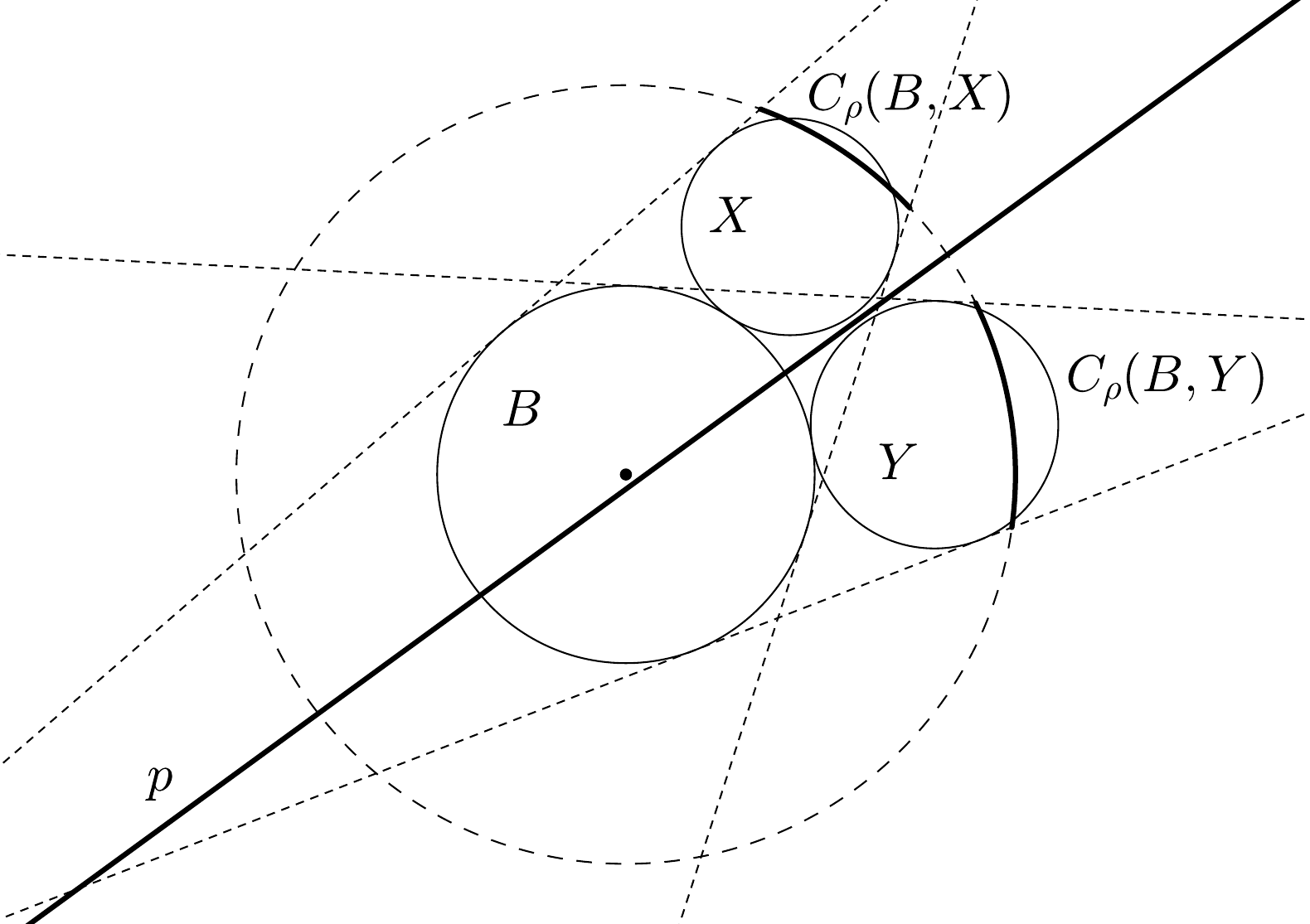}
\end{center}

\begin{proof}
Consider an arbitrary plane $p$ separating closed balls $X$ and $Y$. We want to show that $p$ separates $C_\rho(B,X)$ and $C_\rho(B,Y)$ as well. Firstly, we note that $p$ must intersect $B$. If this is not the case, one of the two open half-spaces formed by $p$ does not have a common point with $B$ but it contains either $X$ or $Y$, both of which are tangent to $B$.

$p$ may not contain interior points of $X$ or $Y$. Therefore, in order to complete the proof of the lemma, it is sufficient for us to show that $p$ does not have any interior points of $C_\rho(B,X)$ or $C_\rho(B,Y)$. We assume that $p$ has an interior point of $C_\rho(B,X)$. We connect this point with an arbitrary point of $p\cap B$ by a line segment. This segment intersects $X$ by an interior point so we get a contradiction to the fact that $p$ contains no interior points of $X$.
\end{proof}

Now we can describe the general approach of using Theorem \ref{thm:florian} in conjunction with Lemma \ref{lem:aux} to find the density bounds for packings formed by non-overlapping balls. Let $X$ be a ball tangent to the ball $B$. We use the same notations as above, i.e. $S_{\rho}(B)\cap X$ is the spherical cap formed by the intersection of $X$ with the enlarged copy of $B$ and $C_\rho(B,X)$ is its auxiliary cap. We denote the spherical radius of the auxiliary cap by $x$ and the area of the initial cap, $S_{\rho}(B)\cap X$, by $K(x)$. Here $K$ is a properly defined function on the set of all possible values of $x$ since the radii of the initial and auxiliary caps are in one-to-one correspondence with each other.

Under this setup, if there is a packing $\mathcal{C}$ of auxiliary caps then $d(\mathcal C)$, defined as $\frac 1 {4\pi} \sum\limits_{C\in\mathcal{C}} K(radius(C))$, is precisely the density of the packing formed by initial caps. We can apply Theorem \ref{thm:florian} to find upper bounds on $d(\mathcal C)$. In order to do this, we need explicit expressions of $\alpha_{min}$, $\alpha_{max}$, and $K$.

\begin{center}
\includegraphics[scale=0.45]{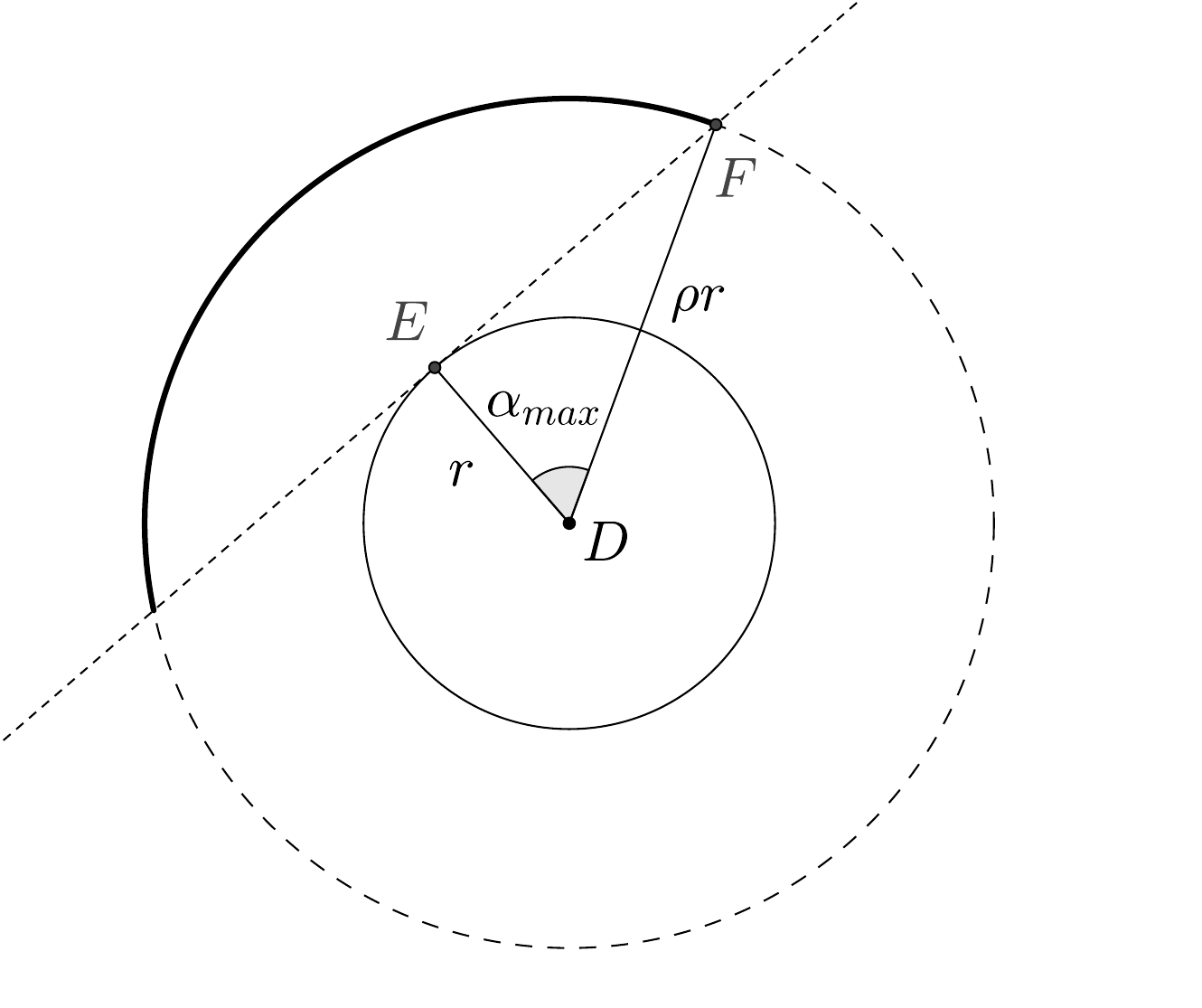}
\end{center}

\begin{equation}\label{formula:alpha_max}
\alpha_{max} = \angle EDF = \arccos{\frac 1 \rho}.
\end{equation}

Here for $\alpha_{max}$ we should take the maximal radius of an auxiliary cap. Formally, in our setup there is no maximal value of the cap radius. The ball $X$ tangent to $B$ may be of any positive radius. We extend the problem by compactifying the set of possible spherical caps and allowing the ball $X$ to be infinite. This means that the maximal spherical cap is formed by the intersection of the closed half-space $X$ tangent to $B$ with the enlarged concentric sphere $S_{\rho}(B)$.

\begin{center}
\includegraphics[scale=0.45]{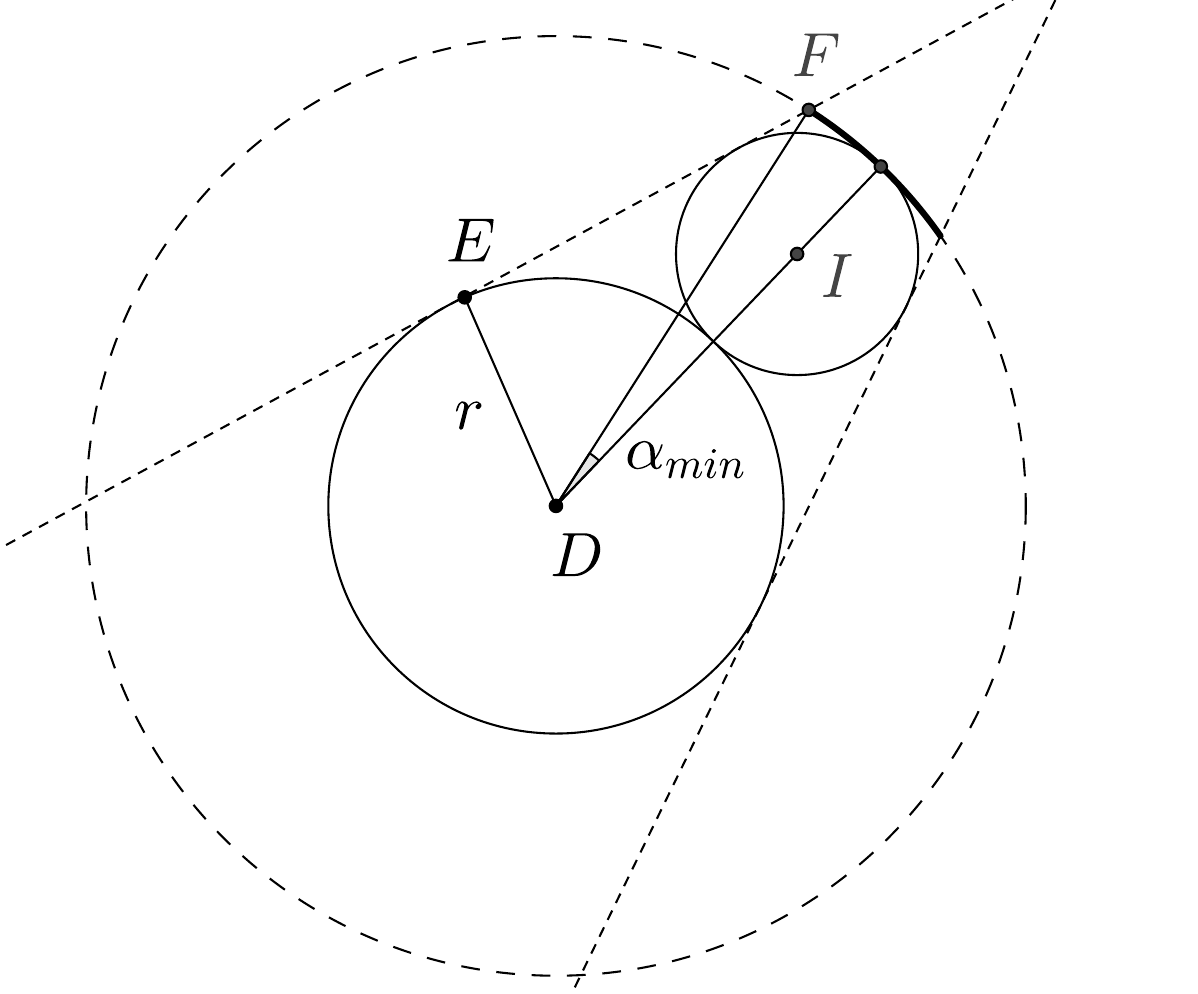}
\end{center}

\begin{equation}\label{formula:alpha_min}
\alpha_{min}=\angle EDI - \angle EDF = \arccos \frac {3-\rho} {1+\rho} - \arccos{\frac 1 \rho}.
\end{equation}

The value of $\alpha_{min}$, the minimal radius of an auxiliary cap, is defined by the ball $X$ whose intersection with $S_{\rho}(B)$ is precisely one point. All smaller balls will have no contribution in $d(\mathcal C)$.

\begin{center}
\includegraphics[scale=0.45]{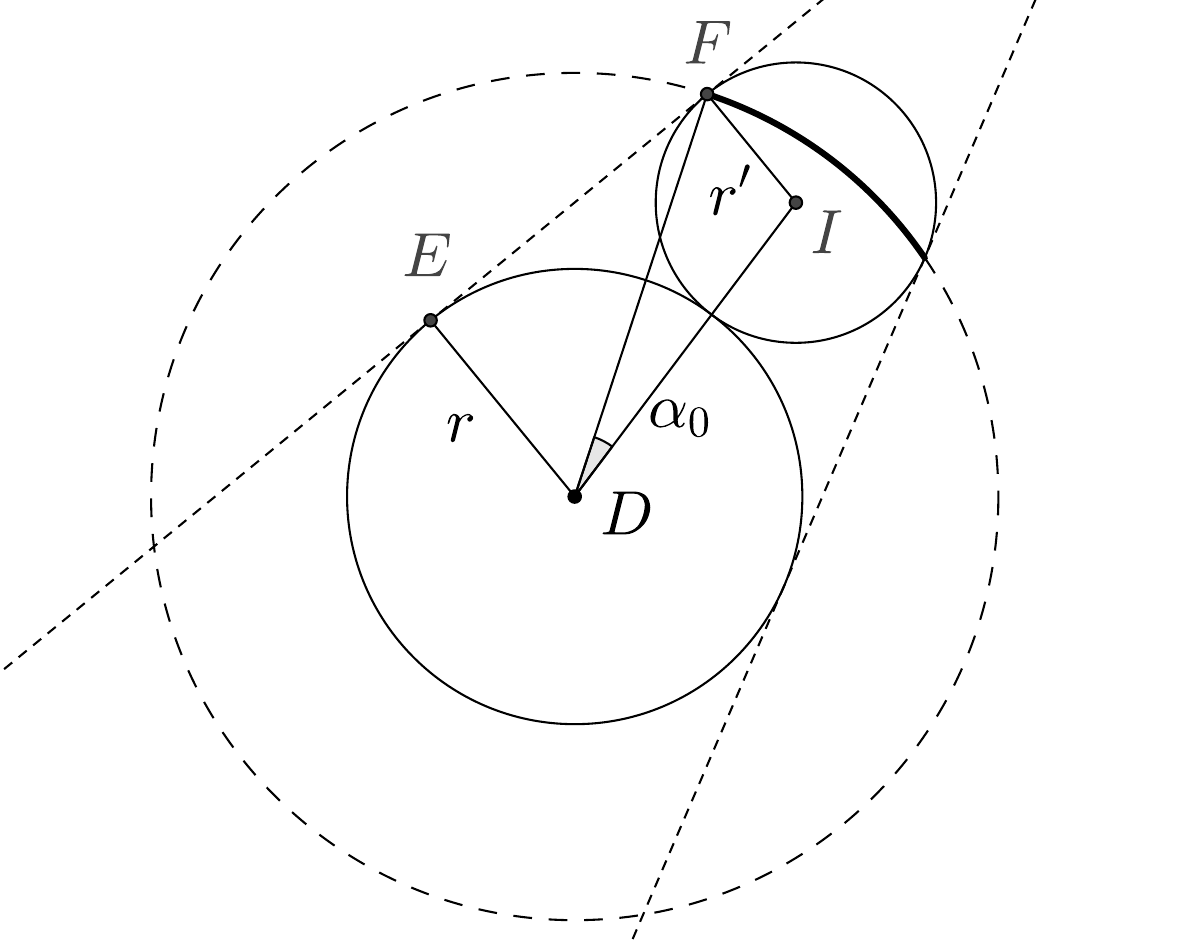}
\end{center}

By definition, the radii of initial and auxiliary caps coincide if the common tangent plane of the ball $B $ and a ball $X$ tangent to $B$ lies inside $S_\rho(B)$. This will happen if $\alpha$ is greater or equal to a certain threshold $\alpha_0$, when the tangent point is exactly on $S_\rho(B)$.

From $\triangle DEF$ we can find that $r'=\frac{\rho^2-1}{4} r$. Then, using formula (\ref{formula:cap_radius}), we get the formula for $\alpha_0$:

\begin{equation}\label{formula:alpha_0}
\alpha_0=\arccos \frac {3\rho^2+1}{\rho(\rho^2+3)}.
\end{equation}
Hence we can define $K(\alpha)$ as the area of a spherical cap with the spherical radius $\alpha$ for $\alpha\in[\alpha_0,\alpha_{max}]$:

\begin{equation}\label{formula:K1}
K(\alpha)=2\pi (1-\cos\alpha) \text{ if } \alpha\in[\alpha_0,\alpha_{max}].
\end{equation}

\begin{center}
\includegraphics[scale=0.45]{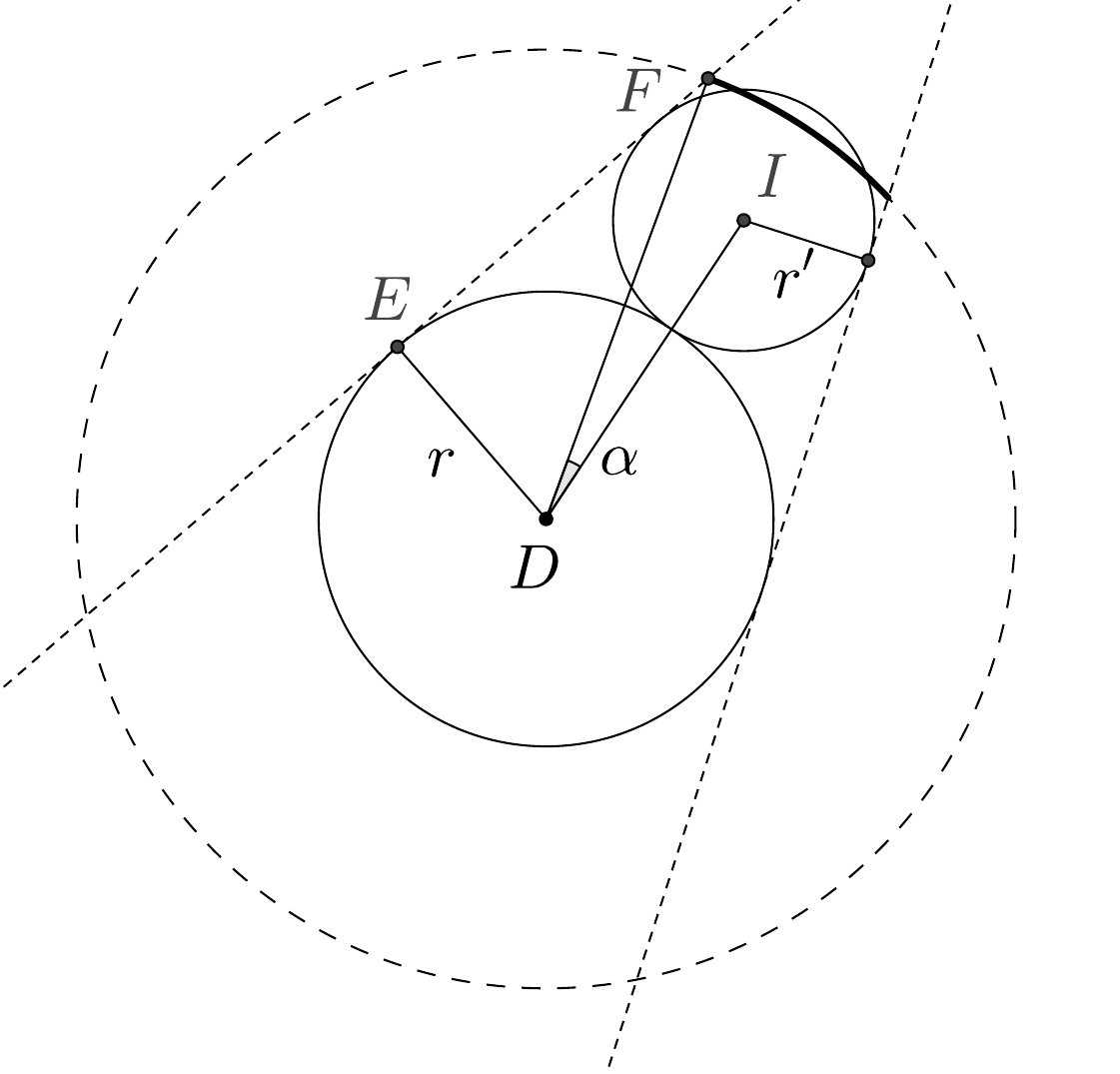}
\end{center}
For the case when the initial cap and its auxiliary cap do not coincide, on the one hand, $\cos\angle EDI = \cos (\angle EDF + \alpha) = \frac 1 \rho \cos \alpha - \sqrt{1 - \frac 1 {\rho^2}}\sin \alpha$. On the other hand, $\cos\angle BAI = \frac {r-r'} {r+r'}$. Hence $r'= \left(\dfrac 2 {\frac 1 \rho \cos \alpha - \sqrt{1 - \frac 1 {\rho^2}}\sin \alpha+1} -1 \right) r.$ Combining this with formula (\ref{formula:cap_radius}), we find $K(\alpha)$ for $\alpha\in[\alpha_{min},\alpha_0]$;

\begin{equation}\label{formula:K2}
K(\alpha)=2\pi \left(1-\dfrac {(\rho^2-1)\left(\frac 1 \rho \cos \alpha - \sqrt{1 - \frac 1 {\rho^2}}\sin \alpha+1\right) +4} {4\rho}\right) \text{ if } \alpha\in[\alpha_{min},\alpha_0].
\end{equation}
The angles $\angle x, \angle y, \angle z$ are found by the Spherical Law of Cosines:

\begin{equation}\label{formula:angle_x}
\angle x = \arccos \frac {\cos(y+z) - \cos(x+z) \cos(x+y)} {\sin(x+z) \sin(x+y)};
\end{equation}

\begin{equation}\label{formula:angle_y}
\angle y = \arccos \frac {\cos(x+z) - \cos(x+y) \cos(y+z)} {\sin(x+y) \sin(y+z)};
\end{equation}

\begin{equation}\label{formula:angle_z}
\angle z = \arccos \frac {\cos(x+y) - \cos(x+z) \cos(y+z)} {\sin(x+z) \sin(y+z)}.
\end{equation}

Finally, we can formulate the general bound in dimension 3.

\begin{theorem}\label{thm:3main}
For any $\rho\in (1,3)$, we define $D_\rho(x,y,z)$ for all triples $x,y,z\in I_\rho = [\alpha_{min},\alpha_{max}]$,

$$D_\rho(x,y,z) = \frac 1 {2\pi (\angle x +\angle y + \angle z - \pi)} \left(K(x)\angle x + K(y)\angle y+K(z)\angle z\right),$$
where $\alpha_{min}, \alpha_{max}, K(\alpha), \angle x, \angle y, \angle z$ are defined by formulas (\ref{formula:alpha_max}-\ref{formula:angle_z}). Then

$$k_3 \leq \inf\limits_{1<\rho<3}\left\{ \max\limits_{x,y,z\in I_\rho} D_\rho(x,y,z)\ \frac {8\rho} {-\rho^2 +4\rho -3}\right\}$$
\end{theorem}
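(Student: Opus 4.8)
The plan is to feed a good estimate for $dens(\rho)$ into Theorem~\ref{thm:gen3}, and to obtain that estimate from Theorem~\ref{thm:florian} for each fixed $\rho\in(1,3)$. As the text explains, applying a density argument directly to the actual caps $S_\rho(B)\cap B_i$ only gives $dens(\rho)\le 1$, because these caps can be arbitrarily small. The way around this is to replace them by the auxiliary caps $C_\rho(B,B_i)$, which by the Lemma proved just above form a packing of $S_\rho(B)$, and whose spherical radii are confined to $I_\rho=[\alpha_{min},\alpha_{max}]$ by formulas~(\ref{formula:alpha_max}) and~(\ref{formula:alpha_min}); here $\alpha_{min}>0$ since $\rho>1$ and $\alpha_{max}=\arccos(1/\rho)<\pi/2$, so $I_\rho\subseteq(0,\pi/2]$, as required by Theorem~\ref{thm:florian}. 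Note also that the axis of the external tangent cone of $B$ and $B_i$ passes through the center of $B$, hence each $C_\rho(B,B_i)$ is a genuine geodesic disk, concentric with $S_\rho(B)\cap B_i$, so $\mathcal{C}=\{C_\rho(B,B_i)\}$ is a legitimate circular packing.

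First I would fix a ball $B$ with non-overlapping tangent balls $\{B_i\}$ and rescale so that $S_\rho(B)$ is the unit sphere; as both $a(B,B_i)$ and the spherical radii of the caps are scale-invariant, nothing is lost. For a ball $X$ tangent to $B$ whose auxiliary cap has spherical radius $\alpha$, the area of the actual cap $S_\rho(B)\cap X$, viewed as a function $K(\alpha)$, is given by~(\ref{formula:K1}) when $\alpha\ge\alpha_0$ (with $\alpha_0$ from~(\ref{formula:alpha_0})), the regime where the two caps coincide, and by~(\ref{formula:K2}) when $\alpha\in[\alpha_{min},\alpha_0]$, obtained by recovering the radius $r'$ of $X$ from $\alpha$ as in the text and substituting into~(\ref{formula:cap_radius}). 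The key verification is that $K$ is non-decreasing on $I_\rho$: on $[\alpha_0,\alpha_{max}]$ this is immediate since $K(\alpha)=2\pi(1-\cos\alpha)$; on $[\alpha_{min},\alpha_0]$ one rewrites the bracket in~(\ref{formula:K2}) as an affine increasing function of $\cos(\alpha+\arccos(1/\rho))$, which decreases in $\alpha$ on the relevant range, so $K$ increases there; and the two expressions agree at $\alpha=\alpha_0$, since there $r'=\tfrac{\rho^2-1}{4}r$ makes~(\ref{formula:cap_radius}) give $\cos\alpha_0=\tfrac{3\rho^2+1}{\rho(\rho^2+3)}$.

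With $K$ in hand I would apply Theorem~\ref{thm:florian} to the packing $\mathcal{C}=\{C_\rho(B,B_i)\}$ of the unit sphere. By construction $\tfrac1{4\pi}\sum_{C\in\mathcal C}K(\mathrm{radius}(C))=\sum_i a(B,B_i)$, so Theorem~\ref{thm:florian} yields $\sum_i a(B,B_i)\le\max_{x,y,z\in I_\rho}D(x,y,z)$, with $D$ as in that theorem. Since the area of a spherical triangle on the unit sphere equals its angular excess $\angle x+\angle y+\angle z-\pi$, this $D(x,y,z)$ is exactly the $D_\rho(x,y,z)$ of the statement, so $dens(\rho)\le\max_{x,y,z\in I_\rho}D_\rho(x,y,z)$. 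Plugging this into Theorem~\ref{thm:gen3} and taking the infimum over $\rho\in(1,3)$ gives the theorem.

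I expect the monotonicity of $K$ — and, more generally, pinning down that $\alpha_{min},\alpha_{max},\alpha_0$ and the dependence $r'=r'(\alpha)$ are exactly as claimed — to be the only real work; once these geometric facts are confirmed, the result is a direct assembly of Theorems~\ref{thm:gen3} and~\ref{thm:florian}. A minor point that must be handled is the case split in the definition of $C_\rho(B,X)$ according to whether the tangency point of a common tangent plane lies inside $S_\rho(B)$; this is precisely why $K$ is piecewise-defined, and it suffices to check that $K$ is continuous at the threshold $\alpha_0$, which the computation above records.
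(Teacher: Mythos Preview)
Your proposal is correct and follows exactly the paper's approach: bound $dens(\rho)$ by $\max_{x,y,z\in I_\rho} D_\rho(x,y,z)$ via Theorem~\ref{thm:florian} applied to the auxiliary packing $\{C_\rho(B,B_i)\}$, then feed this into Theorem~\ref{thm:gen3}. You even supply details the paper leaves implicit, most notably the verification that $K$ is non-decreasing on $I_\rho$, which is a hypothesis of Theorem~\ref{thm:florian}.
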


\begin{proof}

Using Theorems \ref{thm:gen3} and \ref{thm:florian} we get

$$k_3\leq \inf\limits_{1<\rho<3}\left\{ dens(\rho)\ \frac {8\rho} {-\rho^2 +4\rho -3}\right\}\leq$$

$$\leq \inf\limits_{1<\rho<3}\left\{ \max\limits_{x,y,z\in I_\rho} D_\rho(x,y,z)\ \frac {8\rho} {-\rho^2 +4\rho -3}\right\}.$$

\end{proof}

We approximate the infimum in Theorem \ref{thm:3main} numerically using MATLAB. The value that is slightly less than $13.91$ %$13.908778\ldots$
is attained at $\rho=1.755$. Here we explain in more detail how this value was calculated.

In order to find the optimal $\rho$ we exclude values where the function $D_\rho(\alpha_0,\alpha_0,\alpha_0)\ \frac {8\rho} {-\rho^2 +4\rho -3}$ is at least $14$. These are found numerically using the $fzero$ function in MATLAB. The interval for suitable $\rho$'s is subsequently narrowed down to $[1.562,1.928]$. We go over $\rho$'s from this interval with the step $0.001$. For each $\rho$, we find the maximal density numerically via the $fminsearch$ function in MATLAB. The values of starting points of $fminsearch$ are taken from the grid with $0.01$ step for each coordinate of $(x,y,z)\in [\alpha_{min},\alpha_{max}]^3$. These calculations show that the minimizing $\rho$ is 1.755 and the minimal value is slightly less that 13.91. %approximately $13.908778$.

Now we can prove the new upper bound for $k_3$ with computer assistance.

\begin{corollary}
$$k_3 < 13.92.$$
\end{corollary}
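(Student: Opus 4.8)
The plan is to make Theorem~\ref{thm:3main} effective: we have already reduced the bound on $k_3$ to a numerical infimum over $\rho\in(1,3)$ of $\max_{x,y,z\in I_\rho}D_\rho(x,y,z)\cdot\frac{8\rho}{-\rho^2+4\rho-3}$, and the numerical search suggests this infimum is roughly $13.908778$, attained near $\rho=1.755$. To get the clean certified bound $k_3<13.955$ it suffices to exhibit a \emph{single} admissible $\rho$ together with a rigorous upper bound (not merely a numerical one) on $\max_{x,y,z\in I_\rho}D_\rho(x,y,z)$, such that the product is strictly below $13.955$. So first I would fix $\rho=1.755$ (or whatever value the authors use), compute $\alpha_{min}$, $\alpha_0$, $\alpha_{max}$ from formulas~(\ref{formula:alpha_max}),(\ref{formula:alpha_0}),(\ref{formula:alpha_max}), and record the explicit closed forms of $K$ on $[\alpha_{min},\alpha_0]$ and $[\alpha_0,\alpha_{max}]$ and of $\angle x,\angle y,\angle z$.

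The substantive step is to bound $D_\rho$ rigorously over the compact cube $I_\rho^3$. I would do this by a Lipschitz/interval-arithmetic argument: the functions $K$ and the angle functions~(\ref{formula:angle_x})--(\ref{formula:angle_z}) are smooth on the relevant domain (the denominators $\sin(x+y)$ etc.\ stay bounded away from $0$ since $x+y\le 2\alpha_{max}<\pi$, and the triangle is nondegenerate so $\angle x+\angle y+\angle z-\pi>0$ stays bounded below), hence $D_\rho$ has a finite Lipschitz constant $L$ on $I_\rho^3$ that one can bound explicitly. Then evaluating $D_\rho$ on a grid of mesh $\delta$ and adding $L\delta\sqrt3$ gives a certified upper bound; choosing $\delta$ small enough (the authors used $0.01$) forces the certified max times $\frac{8\rho}{-\rho^2+4\rho-3}$ below $13.955$, leaving comfortable slack for the roundoff in the MATLAB evaluations. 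Equivalently, and what I'd actually present, is to verify by interval arithmetic that $D_\rho(x,y,z)<13.955\cdot\frac{-\rho^2+4\rho-3}{8\rho}$ on every subcube of the grid, so that the bound is a finite (computer-checkable) verification rather than an unverified optimum. One should also double-check that the degenerate limits of $D_\rho$ on the boundary of the parameter region (two tangent circles forcing the third to shrink to $\alpha_{min}$, or the orthoscheme limit) are captured by the grid, since the maximum of a simplex-bound density is often approached at such configurations.

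The main obstacle is precisely this rigorous control of a maximum of a messy transcendental function of three variables: naively, $fminsearch$ only returns a local optimum from finitely many seeds, so the gap between the reported $13.908778$ and the claimed $13.955$ exists exactly to absorb (i) the possibility of a slightly higher unfound local maximum and (ii) floating-point error. Making that gap \emph{provably} sufficient requires either the Lipschitz-plus-grid estimate above with an honest constant $L$, or a verified global optimization routine; I expect the bookkeeping for $L$ — differentiating the arccos expressions and bounding all partials uniformly on $I_\rho^3$ — to be the tedious but not deep part. Given that, the corollary follows immediately: plug the certified bound into Theorem~\ref{thm:3main} at the chosen $\rho$ to conclude $k_3\le \max_{x,y,z\in I_\rho}D_\rho(x,y,z)\cdot\frac{8\rho}{-\rho^2+4\rho-3}<13.955$.
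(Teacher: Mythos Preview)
Your plan is correct and matches the paper's approach: fix $\rho=1.755$, subdivide $I_\rho^3$ into small cubes, and certify an upper bound for $D_\rho$ on each subcube by computer. The one substantive difference is in how the per-cube bound is obtained. You propose either a global Lipschitz constant or generic interval arithmetic, and you flag the bookkeeping for $L$ as the tedious part. The paper sidesteps that entirely by exploiting monotonicity of the individual factors: on a cube $[a,a+\delta]\times[b,b+\delta]\times[c,c+\delta]$ the area $\angle x+\angle y+\angle z-\pi$ is minimized at $(a,b,c)$, each $K(\cdot)$ is maximized at the right endpoint, and each angle $\angle x$ is maximized at an explicitly identified corner (depending on the sign of $2x+y+z-\pi$). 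Plugging these extremes into the crude estimate
\[
D_\rho \le \frac{\max K(x)\max\angle x+\max K(y)\max\angle y+\max K(z)\max\angle z}{2\pi\,\min\mathrm{area}}
\]
gives a certified upper bound on each cube with no derivative computations at all. This is cheaper and cleaner than bounding $L$, at the cost of requiring a finer mesh: the paper uses $\delta=0.0005$, not the $0.01$ you mention (that was only the seed spacing for the preliminary \texttt{fminsearch}).
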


\begin{proof}
We use $\rho=1.755$ and estimate $D_\rho(x,y,z)$ from above. Note that, over any compact region in $\mathbb{R}^3$, 

\begin{equation}\label{formula:estimate_D}
\max D_\rho(x,y,z) \leq  \frac 1 {2\pi \min area(x,y,z)} \left(\max K(x) \max \angle x +\max K(y) \max \angle y +\max K(z) \max \angle z  \right).
\end{equation}

We subdivide $I_\rho^3$ into cubes $[a,a+\delta]\times [b,b+\delta]\times [c,c+\delta]$. Straightforward calculations show that, over such a cube, the minimum area is attained at $(a,b,c)$, the maximum $K(x)$ is attained at $a+\delta$ (similarly, for the maxima of $K(y)$ and $K(z)$), the maximum $\angle x$ is attained at $(a,b+\delta,c+\delta)$ if $2x+y+z\leq\pi-4\delta$, at $(a+\delta,b+\delta,c+\delta)$ if $2x+y+z\geq\pi$, and at one of these two points for rare cases when $\pi-4\delta<2x+y+z<\pi$ (similarly, for the maxima of $\angle y$ and $\angle z$). Using $\delta=0.00005$ and checking values from inequality (\ref{formula:estimate_D}) for all cubes of the subdivision via computer, we get that $D_\rho(x,y,z)\ \frac {8\rho} {-\rho^2 +4\rho -3} < 13.92$.
\end{proof}

%\begin{remark}
%Using better estimates than in inequality (\ref{formula:estimate_D}) or smaller values for $\delta$ one should get a bound closer to the actual value $13.908778\ldots$ obtained numerically.
%\end{remark}

\section{Discussion}\label{sect:discuss}

In this section we would like to list several general observations and directions for research in this area.

\begin{enumerate}

\item First of all, we note that the approach utilized in the paper actually solves a more general problem. In a packing, any two elements do not overlap. Instead of this condition we can require a weaker condition: for any ball in a family $\mathcal{F}$, all balls from $\mathcal{F}$ tangent to it do not overlap. All upper bounds for the average degree of contact graphs will be valid for such families as well. It will be interesting to find an argument taking into account the actual packing condition.

\item As we can see, some methods used for finding upper bounds can be transferred to bound the average degree of packings with different radii. Arguably, the most successful of these methods is based on zonal spherical functions and linear or semidefinite programming (see \cite{kab78, mus08, bac08, mit10}). It seems feasible to use some sort of averaging argument and extend the bounds obtained by Delsarte's method (see \cite{del73, del77}) to the case of different radii.

\begin{remark}
After the paper was accepted for publication, Dostert, Kolpakov, and Oliveira \cite{dos20} used the general setup of Sections \ref{sect:kup-sch}-\ref{sect:high} and the semidefinite programming approach to find new upper bounds for $k_d$ in dimensions $3,\ldots, 9$.
\end{remark}

\item Unfortunately, there are no higher-dimensional analogues of Florian's results. The proof in \cite{bor78} is quite heavy technically and cannot be directly extended to the case of different radii.

\item Since the kissing case for congruent balls is essentially a particular case of the average kissing number problem, any area-based approach cannot bring an upper bound better than $\approx 13.397$ (Fejes T\'{o}th -- Coxeter -- B\"{o}r\"{o}czky simplex bound).

\end{enumerate}

\section{Acknowledgments}

The author would like to thank Arseniy Akopyan who brought this problem to his attention and the anonymous referee whose comments have improved the paper immensely. The author was supported in part by NSF grant DMS-1400876.

\bibliographystyle{amsalpha}

\end{document}